\documentclass[12pt,reqno]{amsart}
\usepackage{amsmath,amssymb,amsfonts,amsthm}
\usepackage[left=3cm, right=3cm, top=2.8cm, bottom=2.8cm]{geometry}
\usepackage[utf8]{inputenc}
\usepackage[T1]{fontenc}
\usepackage{color}
\usepackage{multirow}
\usepackage{graphicx}
\usepackage{hyperref}
\usepackage{stackrel}


\newtheorem{theorem}{Theorem}

\newtheorem{proposition}[theorem]{Proposition}

\newtheorem{definition}[theorem]{Definition}
\newtheorem{remark}[theorem]{Remark}

\linespread{1.2}


\begin{document}

\title[Abstract Differential Equations and Caputo Fractional Derivative]{Abstract Differential Equations and Caputo Fractional Derivative}


\author[P. M. Carvalho-Neto]{Paulo M. de Carvalho-Neto}
\address[Paulo M. de Carvalho Neto]{Departamento de Matem\'atica, Centro de Ciências Físicas e Matemáticas, Universidade Federal de Santa Catarina, Florian\'{o}polis - SC, Brazil}
\email[]{paulo.carvalho@ufsc.br}


\subjclass[2010]{26A33, 35R11, 33E12}


\keywords{fractional differential equations, Mittag-Leffler operators, sectorial operators, semilinear equations}


\begin{abstract}
In this work I consider the abstract Cauchy problems with Caputo fractional time derivative of order $\alpha\in(0,1]$, and discuss the continuity of the respective solutions regarding the parameter $\alpha$. I also present a study about the continuity of the Mittag-Leffler families of operators (for $\alpha\in(0,1]$), induced by sectorial operators.
\end{abstract}

\maketitle

\section{Contextualization of the Main Problem}
\label{sec1}

Let me begin by presenting the abstract Cauchy problem
  \begin{equation}\label{3.1}\tag{$P_\alpha$}
   \left\{\begin{array}{l}
   cD^\alpha_tu(t)=Au(t)+f(t,u(t)),\,\,\,t\geq 0,\\
   u(0)=u_0\in X,
   \end{array} \right.
  \end{equation}
where $X$ is a Banach space over $\mathbb{C}$, $\alpha\in(0,1]$, $A:{D}(A)\subset X\rightarrow X$ is a sectorial operator, $f:[0,\infty)\times X\rightarrow X$ is a continuous function and $cD^{\alpha}_{t}$ denotes Caputo fractional derivative of order $\alpha$, when $\alpha\in(0,1)$, and denotes the standard derivative when $\alpha=1$. (see Section \ref{sec2} for more details).

In a previous work \cite{AnCaCaMa1} (see also \cite{Car1}), which was done with some collaborators, I have presented several results about the existence, uniqueness and continuation of mild solutions to problem \eqref{3.1}, when $\alpha\in(0,1)$. In fact, the results obtained in \cite{AnCaCaMa1} are similar to the classical results that are obtained when $\alpha=1$. Therefore, in order to clarify the differences between these two situations, let me present the following notions and results.

 \begin{definition} Consider $\alpha\in(0,1]$ and $T$ a positive real value.\vspace*{0.2cm}
    \begin{itemize}
    \item[(a)] A function $\phi_\alpha:[0,T]\rightarrow X$ is called a local mild solution of \eqref{3.1} in $[0,T]$ if $\phi_\alpha\in C([0,T];X)$ and
    \begin{equation*}\phi_\alpha(t)=\left\{\begin{array}{ll}E_{\alpha}(At^{\alpha})u_{0}+\displaystyle\int_{0}^{t}(t-s)^{\alpha-1}E_{\alpha,\alpha}(A(t-s)^{\alpha})f(s,\phi_\alpha(s))ds,&\\&\hspace*{-1.5cm}\textrm{if }\alpha\in(0,1),\vspace*{0.2cm}\\
    e^{At}u_{0}+\displaystyle\int_{0}^{t}e^{A(t-s)}f(s,\phi_\alpha(s))ds,&\hspace*{-1.5cm}\textrm{if }\alpha=1,\end{array}\right. \end{equation*}
    for every  $t\in [0,T].$\vspace*{0.2cm}
    \item[(b)] A function $\phi:[0,T)\rightarrow X$ is called a local mild solution of \eqref{3.1} in $[0,T)$, if for any $T^*\in(0,T)$, $\phi(t)$ is a local mild solution of \eqref{3.1} in $[0,T^*]$.\vspace*{0.2cm}
    \end{itemize}
  \end{definition}

 \begin{definition} Let $\alpha\in(0,1]$ be given. A function $\phi:[0,\infty)\rightarrow X$ is called a global mild solution of \eqref{3.1}, if for any $T^*>0$, $\phi(t)$ is a local mild solution of \eqref{3.1} in $[0,T^*]$.
  \end{definition}

It worths to recall that the family of operators $\{e^{At}:t\geq0\}\subset\mathcal{L}(X)$ is the standard analytic semigroup generated by the sectorial operator $A$, while the families $\{E_{\alpha}(At^{\alpha}):t\geq0\}\subset\mathcal{L}(X)$ and $\{E_{\alpha,\alpha}(At^{\alpha}):t\geq0\}\subset\mathcal{L}(X)$, which are called Mittag-Leffler operators, are standard from the literature of fractional differential equations (see for instance \cite{AnCaCaMa1}, and references therein). However, for the completeness of this manuscript I also present more details about these operators in Section \ref{sec2}.

By introducing the notions of continuation of mild solution and maximal local mild solution, it is possible to prove an interesting result, which was called in \cite{AnCaCaMa1} ``Blow up Alternative''. The ``Blow up Alternative'' was what motivated the discussion proposed in this work.

\begin{definition} Let $\alpha\in(0,1]$ and let $\phi_\alpha:[0,{T})\rightarrow X$ be a local mild solution in $[0,{T})$ of problem \eqref{3.1}. If ${T^*}\geq{T}$ and ${\phi_\alpha}^*:[0,{T^*}]\rightarrow X$ is a local mild solution to \eqref{3.1} in $[0,{T^*}]$, with ${\phi_\alpha}^*(t)=\phi_\alpha(t)$, for every $t\in[0,T)$, then I say that ${\phi_\alpha}^*(t)$ is a continuation of $\phi_\alpha(t)$ over $[0,{T^*}]$.
\end{definition}

\begin{definition} Consider $\alpha\in(0,1]$ and $\phi_\alpha:[0,T)\rightarrow X$. If $\phi(t)$ is a local mild solution of \eqref{3.1} in $[0,T)$ that does not have a continuation in $[0,T]$, then I call it a maximal local mild solution of \eqref{3.1} in $[0,T)$.
\end{definition}

\begin{theorem}[Blow up Alternative] \label{bualt} Let $\alpha\in(0,1)$ and $f:[0,\infty)\times X\rightarrow X$ be a continuous function, locally Lipschitz in the second variable, uniformly with respect to the first variable, and bounded (i.e. it maps bounded sets onto bounded sets). Then problem \eqref{3.1} has a global mild solution $\phi_\alpha(t)$ in $[0,\infty)$ or there exist $\omega_\alpha\in (0,\infty)$ such that $\phi_\alpha:[0,\omega_\alpha)\rightarrow X$ is a maximal local mild solution in $[0,\omega_\alpha)$, and in such a case, $\limsup_{t\rightarrow{\omega_\alpha}^{-}}\|\phi_\alpha(t)\|=\infty$.
\end{theorem}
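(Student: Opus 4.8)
The plan is to follow the classical continuation scheme for evolution equations, adapting each step to the fractional mild formulation in part (a) of the first definition. First I would invoke the local existence and uniqueness theory from \cite{AnCaCaMa1}: since $f$ is locally Lipschitz in the second variable uniformly in the first, a contraction argument on $C([0,\tau];X)$ applied to the integral equation produces, for each admissible initial datum, a unique local mild solution on some interval $[0,\tau]$. Uniqueness lets me glue solutions, so I would define $\omega_\alpha$ as the supremum of all $T>0$ for which a mild solution exists on $[0,T)$; the glued object $\phi_\alpha:[0,\omega_\alpha)\to X$ is then the maximal local mild solution. If $\omega_\alpha=\infty$ we are in the first alternative and there is nothing more to prove, so from now on I assume $\omega_\alpha<\infty$.

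The remaining task is to establish the blow-up conclusion, which I would argue by contradiction: suppose $\limsup_{t\to\omega_\alpha^-}\|\phi_\alpha(t)\|=:R<\infty$. Then $\phi_\alpha$ is bounded on $[0,\omega_\alpha)$, and since $f$ maps bounded sets onto bounded sets, the composition $s\mapsto f(s,\phi_\alpha(s))$ is bounded, say by $K$, on $[0,\omega_\alpha)$. Combining this with the uniform operator bounds and the strong continuity of the families $\{E_\alpha(At^\alpha)\}$ and $\{E_{\alpha,\alpha}(At^\alpha)\}$ recalled in Section \ref{sec2}, I would show that $\phi_\alpha$ is uniformly continuous near $\omega_\alpha$. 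Concretely, for $t_1<t_2<\omega_\alpha$ the difference $\phi_\alpha(t_2)-\phi_\alpha(t_1)$ splits into the variation of $E_\alpha(At^\alpha)u_0$, which is controlled by strong continuity, and a difference of the two integral terms; the latter is handled by isolating the overlapping region, using the continuity of the operator family together with the integrability of the kernel $(t-s)^{\alpha-1}$ (here $\alpha\in(0,1)$ is essential) and the bound $K$. This yields a Cauchy estimate, so $\lim_{t\to\omega_\alpha^-}\phi_\alpha(t)$ exists; I would set $\phi_\alpha(\omega_\alpha)$ equal to this limit, extending $\phi_\alpha$ continuously to $[0,\omega_\alpha]$.

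Finally I would extend the solution strictly beyond $\omega_\alpha$, contradicting maximality. This is the step where the fractional case genuinely departs from $\alpha=1$: the Mittag-Leffler operators do not enjoy a semigroup law, so one cannot simply restart problem \eqref{3.1} from the state $\phi_\alpha(\omega_\alpha)$. Instead I would keep the representation anchored at $t=0$ and, for small $\delta>0$, split the integral over $[0,t]$ into the known part over $[0,\omega_\alpha]$ (which depends only on the already-constructed $\phi_\alpha|_{[0,\omega_\alpha]}$ and defines a fixed continuous forcing term) and the unknown part over $[\omega_\alpha,t]$. Treating the first part as data, the equation on $C([\omega_\alpha,\omega_\alpha+\delta];X)$ becomes a fixed-point problem of exactly the same type as in local existence; the local Lipschitz bound on $f$ around $\phi_\alpha(\omega_\alpha)$, together with the kernel estimate $\int_{\omega_\alpha}^{t}(t-s)^{\alpha-1}\,ds=(t-\omega_\alpha)^\alpha/\alpha\to 0$, makes the associated operator a contraction for $\delta$ small enough. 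The resulting fixed point is a continuation of $\phi_\alpha$ past $\omega_\alpha$, contradicting that $[0,\omega_\alpha)$ is maximal. I expect this last extension step --- reconciling the nonlocal memory term with a clean restart --- to be the main obstacle, whereas the uniform-continuity estimate and the maximal-interval bookkeeping are comparatively routine.
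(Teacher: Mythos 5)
Your outline is correct and is essentially the argument of the cited source: the paper itself does not prove Theorem \ref{bualt} but defers to \cite{AnCaCaMa1}, where the proof proceeds exactly as you describe — local existence by contraction with the singular kernel $(t-s)^{\alpha-1}$, a maximal interval defined as a supremum, and, under a boundedness assumption near $\omega_\alpha$, a continuous extension to $[0,\omega_\alpha]$ followed by a restart that keeps the integral representation anchored at $t=0$ and treats the memory term over $[0,\omega_\alpha]$ as fixed data. You correctly identify the only genuinely fractional difficulty (the lack of a semigroup law for the Mittag-Leffler families) and resolve it the same way the reference does.
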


\begin{remark}\label{bualt2}  Although in \cite{AnCaCaMa1} I have not presented a proof for Theorem \ref{bualt} in the case $\alpha=1$, it is simple to adapt such result. Therefore, since this proof does not bring any gain to the results discussed in this paper, I prefer to omit it from this manuscript.
\end{remark}

Theorem \ref{bualt} is my main concern from now on. Observe that for each $\alpha\in(0,1)$ there exists $\omega_\alpha>0$ (it may occurs $\omega_\alpha=\infty$ for some values of $\alpha\in(0,1]$) and a respective maximal local mild solution (or global mild solution) $\phi_\alpha:[0,\omega_\alpha)\rightarrow X$ to problem \eqref{3.1}.

It is very natural that several questions can be raised from the facts emphasized above. In this paper, the main questions I discuss are the following:
{\begin{itemize}
\item[{Q1})] Does $E_{\alpha}(A t^\alpha)$ converges to the semigroup $e^{At}$, when $\alpha\rightarrow 1^{-}$, in some sense?
\item[{Q2})] Does $E_{\alpha,\alpha}(A t^\alpha)$ converges to the semigroup $e^{At}$, when $\alpha\rightarrow 1^{-}$, in some sense?
\item[{Q3})] Is the intersection of every $[0,\omega_\alpha)$ non trivial, i.e.,
\begin{equation*}\{0\}\subsetneq  \stackrel[{\alpha\in(0,1]}]{}{\bigcap}[0,\omega_\alpha)\,\,?\end{equation*}
\item[{Q4})] Does $\phi_\alpha(t)$ converges to $\phi_1(t)$, when $\alpha\rightarrow1^-$, in some sense?\vspace*{0.2cm}
\end{itemize}}
The set of questions made above are called here the ``Limit Problems''. The main objective of this work is to answer these questions as fully as possible.

In order to describe the subjects dealt with in this work, I present a short summary of each of the following sections.

Section \ref{sec2}  makes a small survey about the notions used in this manuscript as a whole. To be more precise, in this section I recall the notions of sectorial operators, analytical semigroups, Mittag-Leffler operators and Caputo fractional derivative of order $\alpha\in(0,1)$.

Finally, in Section \ref{sec3} I present the results that answer the four questions called the Limit Problems, while in Section \ref{sec4} I present a short last discussion about the results obtained in Section \ref{sec3}.

\section{A Small Survey on Sectorial Operators and Fractional Calculus}
\label{sec2}

Let me begin by recalling the notions of sectorial operators and analytical semigroups. It worths to emphasize that for a complete discussion of these notions I may refer to \cite{He1,HiPh1,Ka2,Pa1} and the references therein.

\begin{definition} A semigroup of linear operators on $X$, or simply semigroup, is a family $\{T(t):t\geq0\}\subset\mathcal{L}(X)$ such that:
    \begin{itemize}
    \item[i)] $T(0)=I_X$, where $I=I_X$ is the identity operator in $X$.\vspace*{0.2cm}
    \item[ii)] It holds that $T(t)T(s)=T(t+s)$, for any $t,s\geq0$.\vspace*{0.2cm}
    \end{itemize}
  \end{definition}

   \begin{definition} Assume that $\{T(t):t\geq0\}$ is a semigroup. If it holds that
   $$\lim_{t\rightarrow0^+}T(t)x=x,$$
   for every $x\in X$, then I say that it is a $C_0$-semigroup.
   \end{definition}

\begin{definition} Let $\{T(t):t\geq0\}$ be a $C_0$-semigroup. Its Infinitesimal generator is the linear operator $A:{D}(A)\subset X\to X$, where
  $${D}(A):=\left\{x\in X:\lim_{t\rightarrow0^+}\dfrac{T(t)x-x}{t}\,\textrm{ exists}\right\}$$
  and
  $$Ax:= \lim_{t\rightarrow0^+}\dfrac{T(t)x-x}{t}.$$
  \end{definition}

From the definitions above, I may present the classical result that relates semigroups and the linear part of \eqref{3.1}, when $\alpha=1$.

  \begin{theorem}\label{semigrupo1} Suppose that $\{T(t):t\geq0\}\subset\mathcal{L}(X)$ is a $C_0$-semigroup on $X$. If $A:{D}(A)\subset X\to X$ is the infinitesimal generator of $\{T(t):t\geq0\}$, then $A$ is a closed and densely defined linear operator. Also, for any $x\in {D}(A)$ the application $[0,\infty)\ni t\to T(t)x\in X$ is continuously differentiable and
    $$\dfrac{d}{dt}T(t)x=AT(t)x=T(t)Ax,\qquad \forall t>0.$$
  \end{theorem}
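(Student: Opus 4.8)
The plan is to build the entire proof on a single device, the integral mean
$x_h := \frac{1}{h}\int_0^h T(s)x\,ds$, whose good behaviour under the semigroup yields density, differentiability, and closedness in turn. Before starting I would record the standard preliminary that a $C_0$-semigroup is locally bounded, i.e.\ that there exist $M\geq 1$ and $\omega\in\mathbb{R}$ with $\|T(t)\|_{\mathcal{L}(X)}\leq Me^{\omega t}$ for all $t\geq 0$; this follows from the uniform boundedness principle together with strong continuity at the origin, and it guarantees that $s\mapsto T(s)x$ is continuous on $[0,\infty)$ for each fixed $x$, so that all the integrals below are well defined.

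First, for the density of $D(A)$, I would fix $x\in X$ and $h>0$ and compute the difference quotient of $x_h$. Using the semigroup law together with a change of variables and a rearrangement of the domains of integration, one gets $\frac{1}{t}\bigl[T(t)x_h-x_h\bigr]=\frac{1}{th}\bigl[\int_h^{t+h}T(s)x\,ds-\int_0^t T(s)x\,ds\bigr]$, which by continuity of $s\mapsto T(s)x$ converges to $\frac{1}{h}\bigl[T(h)x-x\bigr]$ as $t\to 0^+$. Hence $x_h\in D(A)$ with $Ax_h=h^{-1}[T(h)x-x]$, and since $x_h\to x$ as $h\to 0^+$ (again by strong continuity at $0$), every $x\in X$ is a limit of elements of $D(A)$, so $D(A)$ is dense.

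Next, for the differentiation formula I would fix $x\in D(A)$ and treat the one-sided difference quotients of $t\mapsto T(t)x$ separately. The right quotient factors as $T(t)\frac{T(h)x-x}{h}=\frac{T(h)-I}{h}T(t)x$; letting $h\to 0^+$ and using boundedness of $T(t)$ shows simultaneously that $T(t)x\in D(A)$, that $AT(t)x=T(t)Ax$, and that the right derivative equals this common value. For $t>0$ the left quotient $T(t-h)\frac{T(h)x-x}{h}$ is handled by adding and subtracting $T(t-h)Ax$, controlling one resulting term by the local bound $\|T(t-h)\|\leq Me^{\omega(t-h)}$ and the other by continuity of $s\mapsto T(s)Ax$. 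Continuity of the derivative is then immediate, since $t\mapsto T(t)Ax$ is continuous, so $t\mapsto T(t)x$ is continuously differentiable.

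Finally, closedness follows by integrating the derivative formula: for $x\in D(A)$ the fundamental theorem of calculus gives $T(h)x-x=\int_0^h T(s)Ax\,ds$. Given a sequence $x_n\in D(A)$ with $x_n\to x$ and $Ax_n\to y$, I would pass to the limit in $T(h)x_n-x_n=\int_0^h T(s)Ax_n\,ds$ — the integral converges because $\|T(s)(Ax_n-y)\|\leq Me^{\omega h}\|Ax_n-y\|$ uniformly on $[0,h]$ — to obtain $T(h)x-x=\int_0^h T(s)y\,ds$; dividing by $h$ and letting $h\to 0^+$ then yields $x\in D(A)$ and $Ax=y$. The step I expect to require the most care is the density argument, since it is there that the integral-mean device must be introduced and the limit of the difference quotient computed explicitly; once that identity and the representation $T(h)x-x=\int_0^h T(s)Ax\,ds$ are in place, the remaining arguments are routine limit manipulations controlled by the local bound.
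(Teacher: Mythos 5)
Your proof is correct and complete: the integral-mean device $\frac{1}{h}\int_0^h T(s)x\,ds$ for density, the two one-sided difference quotients for the differentiation formula $\frac{d}{dt}T(t)x = AT(t)x = T(t)Ax$, and the representation $T(h)x - x = \int_0^h T(s)Ax\,ds$ for closedness are exactly the standard argument. The paper itself offers no proof, only a citation to Theorems 10.3.1 and 10.3.3 of Hille--Phillips, and your argument is essentially the one found in that and any other standard reference on $C_0$-semigroups.
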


  \begin{proof} See Theorems 10.3.1 and 10.3.3 in \cite{HiPh1}.
  \end{proof}

\begin{remark} From now on, to avoid confusion in this manuscript, whenever $A$ is the infinitesimal generator of $\{T(t):t\geq0\}$, I shall write that $e^{At}$ instead of $T(t)$.
\end{remark}

Now I describe an important curve in the complex plane, that is used throughout the text. 


  \begin{definition} The symbol $Ha$ denotes the Hankel's path, if there exist $\epsilon>0$ and $\theta\in(\pi/2,\pi)$, where $Ha=Ha_1+Ha_2-Ha_3$, and the paths $Ha_i$ are given by
  $$Ha=\left\{\begin{array}{l}Ha_1:=\{te^{i\theta}:t\in[\epsilon,\infty)\},\\
  Ha_2:=\{\epsilon e^{it}:t\in[-\theta,\theta)\},\\
  Ha_3:=\{te^{-i\theta}:t\in[\epsilon,\infty)\}.\end{array}\right.$$
  %
  \end{definition}

 The following discussion provides a brief description of the basic results of the theory of sectorial operators and analytical semigroups.


  \begin{definition} Let $A:{D}(A)\subset X\to X$ be a closed and densely defined operator. The operator $A$ is said to be a {sectorial operator} if there exist $\theta\in(\pi/2,\pi)$ and $M_\theta>0$ such that
  $$ S_{\theta}:=\{\lambda\in\mathbb{C}\,:\,|\arg(\lambda)|\leq\theta,\,\lambda\not=0\}\subset \rho(A)$$
  and
  $$\|(\lambda-A)^{-1}\|_{\mathcal{L}(X)}\leq\dfrac{M_\theta}{|\lambda|},\,\,\,\,\forall\,\lambda\in S_{\theta}.$$
  \end{definition}

  \begin{theorem}\label{1.103} If $A:{D}(A)\subset X\to X$ is a sectorial operator, then $A$ generates a $C_0$-semigroup $\{e^{At}:t\geq0\}$, which is given by
  $$ e^{At}=\dfrac{1}{2\pi i}\int_{Ha}{e^{\lambda t}(\lambda-A)^{-1}}\,d\lambda,\quad\forall t\geq0,$$
  where $Ha\subset \rho(A)$. Moreover, there exists $M_1>0$ such that
  $$\|e^{At}\|_{\mathcal{L}(X)}\leq M_1,\quad \|Ae^{At}\|_{\mathcal{L}(X)}\leq (M_1/t),$$
  for every $t>0$. Finally, for any $x\in X$ the function $[0,\infty)\ni t \mapsto e^{At}x\in X$ is analytic and
  $$\dfrac{d}{dt}e^{At}x=Ae^{At}x,$$
  for every $t>0$.
  \end{theorem}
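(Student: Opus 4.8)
The plan is to take the integral in the statement as the \emph{definition} of $e^{At}$ for $t>0$ (with $e^{A0}:=I$) and to verify, in order, that it converges, satisfies the two norm bounds, enjoys the semigroup and $C_0$ properties, and is differentiable with the asserted derivative. First I would check convergence and the bounds. On the rays $Ha_1$ and $Ha_3$ one writes $\lambda=re^{\pm i\theta}$ with $\cos\theta<0$ (since $\theta\in(\pi/2,\pi)$), so $|e^{\lambda t}|=e^{rt\cos\theta}$ decays exponentially as $r\to\infty$, while the sectorial estimate gives $\|(\lambda-A)^{-1}\|\le M_\theta/r$; the arc $Ha_2$ is compact. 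Hence the integrand is absolutely integrable and defines an element of $\mathcal{L}(X)$. By Cauchy's theorem, analyticity of $\lambda\mapsto e^{\lambda t}(\lambda-A)^{-1}$ on $S_\theta$ together with its decay at infinity shows the value is independent of the radius $\epsilon$. Choosing $\epsilon=1/t$ and rescaling $\lambda=z/t$ turns the path into a fixed ($t$-independent) Hankel path and produces, after using $\|(\lambda-A)^{-1}\|\le M_\theta/|\lambda|$, the uniform bound $\|e^{At}\|\le M_1$.

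For the second estimate I would write $A(\lambda-A)^{-1}=\lambda(\lambda-A)^{-1}-I$ and note that $\int_{Ha}e^{\lambda t}\,d\lambda=0$, because the integrand is entire with antiderivative $e^{\lambda t}/t$ that vanishes at both ends of the contour. Thus $Ae^{At}=(2\pi i)^{-1}\int_{Ha}\lambda e^{\lambda t}(\lambda-A)^{-1}\,d\lambda$, and the same rescaling (now the extra factor $|\lambda|$ is cancelled by the resolvent bound, leaving one power of $1/t$) yields $\|Ae^{At}\|\le M_1/t$.

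Next, for the semigroup identity I would compute $e^{At}e^{As}$ as a double integral over two nested Hankel paths $Ha$ (radius $\epsilon$) for $\lambda$ and $\widetilde{Ha}$ (radius $\widetilde\epsilon>\epsilon$) for $\mu$, apply the resolvent identity $(\lambda-A)^{-1}(\mu-A)^{-1}=[(\lambda-A)^{-1}-(\mu-A)^{-1}]/(\mu-\lambda)$, and use Fubini (justified by absolute integrability). Integrating the first term in $\mu$ by residues, $\int_{\widetilde{Ha}}e^{\mu s}/(\mu-\lambda)\,d\mu=2\pi i\,e^{\lambda s}$ since $\lambda$ lies inside $\widetilde{Ha}$, while the second term vanishes after integrating in $\lambda$ (no enclosed pole, $\mu$ being on the outer path). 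This collapses the double integral to $(2\pi i)^{-1}\int_{Ha}e^{\lambda(t+s)}(\lambda-A)^{-1}\,d\lambda=e^{A(t+s)}$. The $C_0$ property I would first obtain on the dense set $D(A)$: using $(2\pi i)^{-1}\int_{Ha}\lambda^{-1}e^{\lambda t}\,d\lambda=1$ for $t>0$ and $(\lambda-A)^{-1}-\lambda^{-1}I=\lambda^{-1}(\lambda-A)^{-1}A$ on $D(A)$, one writes $e^{At}x-x=(2\pi i)^{-1}\int_{Ha}\lambda^{-1}e^{\lambda t}(\lambda-A)^{-1}Ax\,d\lambda$, which the rescaling shows is $O(t)$; uniform boundedness of $\{e^{At}\}$ then extends $\lim_{t\to0^+}e^{At}x=x$ to all of $X$ by density.

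Finally, differentiating under the integral sign (legitimate by dominated convergence on compact $t$-subsets of $(0,\infty)$) gives $\frac{d}{dt}e^{At}x=(2\pi i)^{-1}\int_{Ha}\lambda e^{\lambda t}(\lambda-A)^{-1}x\,d\lambda=Ae^{At}x$, matching the formula derived for $Ae^{At}$; letting $t\to0^+$ identifies $A$ as the infinitesimal generator on $D(A)$. Analyticity follows because the exponential decay on the rays persists for complex $t$ in a sector about the positive real axis, so the same integral defines a holomorphic $\mathcal{L}(X)$-valued function there. The main obstacle I anticipate is the semigroup identity: making the nested-contour residue computation rigorous—selecting compatible radii, justifying Fubini, and controlling the contributions at infinity—is the delicate point, whereas the norm bounds, the $C_0$ property, and the differentiation formula are comparatively routine once the rescaling $\lambda=z/t$ is in place.
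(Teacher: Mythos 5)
The paper offers no proof of this theorem: it simply cites Theorem 1.3.4 of Henry \cite{He1} and Theorem 6.13 of Chapter 2 of Pazy \cite{Pa1}. Your outline is precisely the classical construction carried out in those references: define $e^{At}$ by the Dunford integral for $t>0$ (and $e^{A0}:=I$ separately, which is indeed necessary since the integral is only conditionally meaningful at $t=0$), obtain both norm bounds by the rescaling $\lambda=z/t$ together with $A(\lambda-A)^{-1}=\lambda(\lambda-A)^{-1}-I$ and $\int_{Ha}e^{\lambda t}\,d\lambda=0$, derive the semigroup law from the first resolvent identity and a residue computation over two contours, prove strong continuity first on $D(A)$ via $(\lambda-A)^{-1}-\lambda^{-1}I=\lambda^{-1}(\lambda-A)^{-1}A$ and then extend by density and uniform boundedness, and differentiate under the integral sign. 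All of these steps are sound and match the cited sources.

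One step would fail exactly as you wrote it, and it is the one you yourself flag as delicate. In the semigroup identity you take the two Hankel paths to differ only in their radii $\epsilon<\widetilde\epsilon$. With the same opening angle $\theta$, the outgoing rays of $Ha$ and $\widetilde{Ha}$ coincide for $|\lambda|\geq\widetilde\epsilon$, so $\mu-\lambda$ vanishes on a set of positive measure in the product path; the kernel $e^{\mu s}e^{\lambda t}/(\mu-\lambda)$ is then not absolutely integrable and both Fubini and the residue evaluation break down. The standard repair is to separate the contours in angle as well as in radius: take $\widetilde{Ha}$ with vertex radius $\widetilde\epsilon>\epsilon$ and half-angle $\widetilde\theta\in(\pi/2,\theta)$, so that $\widetilde{Ha}$ lies strictly to the right of $Ha$ and $|\mu-\lambda|$ is bounded below by a constant multiple of $1+|\lambda|+|\mu|$. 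One must also record that, by Cauchy's theorem and the decay of the integrand, deforming the angle from $\theta$ to $\widetilde\theta$ does not change the value of the defining integral (both contours stay in $S_\theta\subset\rho(A)$). With that modification, for $\lambda\in Ha$ the pole $\mu=\lambda$ lies to the left of $\widetilde{Ha}$ and $\frac{1}{2\pi i}\int_{\widetilde{Ha}}\frac{e^{\mu s}}{\mu-\lambda}\,d\mu=e^{\lambda s}$, while for $\mu\in\widetilde{Ha}$ the pole $\lambda=\mu$ lies to the right of $Ha$ and the corresponding $\lambda$-integral vanishes, which is exactly the collapse you describe; the rest of your argument then goes through.
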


  \begin{proof} See Theorem 1.3.4 in \cite{He1} or Theorem 6.13 in \cite[Chapter 2]{Pa1}.
  \end{proof}

 Now, let me draw our attention to the fractional calculus, the Mittag-Leffler functions and their relations with the sectorial operators.

\begin{definition} For $\alpha\in(0,1)$ and $f:[0,T]\rightarrow{X}$, the Riemann-Liouville fractional integral of order $\alpha>0$ of $f(t)$ is defined by
\begin{equation}\label{rlfrac1}J_{t}^\alpha f(t):=\dfrac{1}{\Gamma(\alpha)}\displaystyle\int_{0}^{t}{(t-s)^{\alpha-1}f(s)}\,ds,\end{equation}
for every $t\in [0,T]$ such that integral \eqref{rlfrac1} exists. Above $\Gamma$ denotes the classical Euler's gamma function.

\end{definition}

\begin{definition} For $\alpha\in(0,1)$ and $f:[0,T]\rightarrow{X}$, the Caputo fractional derivative of order $\alpha\in(0,1)$ of $f(t)$ is defined by
\begin{equation}\label{rlfrac2}cD_{t_0,t}^\alpha f(t):=\dfrac{d}{dt}\Big\{J_{t}^{1-\alpha} \big[f(t)-f(0)\big]\Big\},\end{equation}
for every $t\in [0,T]$ such that \eqref{rlfrac2} exists.
\end{definition}

\begin{remark} I could have defined Caputo fractional derivative for any $\alpha>0$, however this is not the focus of this work. It also worths to emphasize that $cD_t^1$ is indeed equal to $(d/dt)$.
\end{remark}

I end this section presenting the Mittag-Leffler operators which are related with the solution of the linear part of \eqref{3.1}, when $\alpha\in(0,1)$ (confront the following result with Theorem \ref{1.103}).

\begin{theorem}\label{2.7} Let $\alpha\in (0,1)$ and suppose that $A:{D}(A)\subset X\to X$ is a sectorial operator. Then, the operators
  $$ E_{\alpha}(At^{\alpha}):=\frac{1}{2\pi i}\int_{Ha}e^{\lambda t}\lambda^{\alpha-1}(\lambda^{\alpha}-A)^{-1}d\lambda,\quad t\ge0,$$
  and
  $$E_{\alpha,\alpha}(At^{\alpha}):=\frac{t^{1-\alpha}}{2\pi i}\int_{Ha}e^{\lambda t}(\lambda^{\alpha}-A)^{-1}d\lambda,\quad t\ge0,\vspace*{0.2cm}$$
  where $Ha\subset \rho(A)$, are well defined. Furthermore, it holds that:
  \begin{itemize}
  \item[(i)] $E_{\alpha}(At^{\alpha})x|_{t=0}=E_{\alpha,\alpha}(At^{\alpha})x|_{t=0}=x$, for any $x\in X$;\vspace*{0.2cm}
  \item[(ii)] $E_{\alpha}(At^{\alpha})$ and $E_{\alpha,\alpha}(At^{\alpha})$ are strongly continuous, i.e., for each $x\in X$
  $$\lim_{t\rightarrow0^+}{\|E_\alpha(A t^\alpha)x-x\|}=0\quad\textrm{and}\quad\lim_{t\rightarrow0^+}{\|E_{\alpha,\alpha}(A t^\alpha)x-x\|}=0;$$
  \item[(iii)] There exists a constant $M_2>0$ {\itshape(uniform on $\alpha$)} such that
  $$\sup_{t\ge0}{\|E_{\alpha}(At^{\alpha})\|_{\mathcal{L}(X)}}\le M_2\quad \textrm{and} \quad \sup_{t\ge0}{\|E_{\alpha,\alpha}(At^{\alpha})\|_{\mathcal{L}(X)}}\le M_2;$$
  \item[(iv)] For each $x\in X$, the function $[0,\infty)\ni t\mapsto E_\alpha(A t^\alpha)x$ is analytic and it is the unique solution of
  $$cD_t^\alpha E_\alpha(A t^\alpha)x=AE_\alpha(A t^\alpha)x,\qquad t>0.$$
\end{itemize}
\end{theorem}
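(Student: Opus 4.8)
The plan is to reduce every assertion to estimates and identities for the resolvent of $A$ along the contour, exploiting the sectorial bound $\|(\lambda^\alpha-A)^{-1}\|_{\mathcal{L}(X)}\le M_\theta/|\lambda^\alpha|$. The first point is to fix the geometry: choosing the opening angle of $Ha$ to be some $\theta'\in(\pi/2,\theta]$ guarantees $|\arg(\lambda^\alpha)|\le\alpha\theta'\le\theta$, hence $\lambda^\alpha\in S_\theta\subset\rho(A)$, for every $\lambda\in Ha$ and every $\alpha\in(0,1)$, so the resolvent bound applies uniformly along the contour. The key device throughout is the rescaling $z=\lambda t$ followed by a contour deformation (legitimate since the integrand is analytic off the branch cut), which rewrites both families as integrals over a single $t$-independent Hankel path $Ha'$:
\begin{equation*}
E_\alpha(At^\alpha)=\frac{1}{2\pi i}\int_{Ha'}e^{z}z^{\alpha-1}(z^\alpha-t^\alpha A)^{-1}\,dz,\qquad
E_{\alpha,\alpha}(At^\alpha)=\frac{1}{2\pi i}\int_{Ha'}e^{z}(z^\alpha-t^\alpha A)^{-1}\,dz.
\end{equation*}
Since $t^\alpha A$ inherits the sectorial bound with the same constant $M_\theta$, well-definedness for $t>0$ follows at once from the exponential decay of $|e^z|$ on the two rays of $Ha'$.

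For (iii) I would work directly from the rescaled formulas, fixing $Ha'$ with inner radius $1$. Bounding the integrands by $M_\theta|e^z|\,|z|^{-1}$ for $E_\alpha$ and by $M_\theta|e^z|\,|z|^{-\alpha}$ for $E_{\alpha,\alpha}$, and noting that $|z|^{-\alpha}\le1$ on $Ha'$ because $|z|\ge1$ there, both operator norms are dominated by $M_\theta(2\pi)^{-1}\int_{Ha'}|e^z|\,|z|^{-1}\,|dz|$, a finite quantity that depends on neither $\alpha$ nor $t$. This is precisely where the uniformity in $\alpha$ comes from, and once the rescaling is available it is the most transparent step.

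For (i)--(ii) the natural route is to subtract the ``$A=0$'' value. Writing $(z^\alpha-t^\alpha A)^{-1}-z^{-\alpha}I=z^{-\alpha}t^\alpha A(z^\alpha-t^\alpha A)^{-1}$ and evaluating on $x\in D(A)$ gives $\|[(z^\alpha-t^\alpha A)^{-1}-z^{-\alpha}]x\|\le M_\theta t^\alpha|z|^{-2\alpha}\|Ax\|$, whence $\|E_\alpha(At^\alpha)x-x\|\le Ct^\alpha\|Ax\|\to0$; density of $D(A)$ together with the uniform bound (iii) extends the limit to all $x\in X$ by the usual three-epsilon argument. Here the leading terms are the Hankel integrals $\frac{1}{2\pi i}\int_{Ha'}e^{z}z^{-s}\,dz=1/\Gamma(s)$, which for $E_\alpha$ (the case $s=1$) returns exactly $x$. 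I expect the genuinely delicate point to be the analogous computation for $E_{\alpha,\alpha}$: the same scheme produces the factor $E_{\alpha,\alpha}(0)=1/\Gamma(\alpha)$ in the limit, so reconciling this with the normalization asserted in (i)--(ii) is the step requiring the most care (in the form stated it appears to hold cleanly only in the semigroup case $\alpha=1$, where $\Gamma(1)=1$), and this is where I would concentrate the argument.

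Finally, for (iv) I would obtain analyticity by differentiating under the integral sign: each $t$-derivative introduces a factor $\lambda$, and the exponential decay on the rays preserves absolute convergence for $t>0$, so $t\mapsto E_\alpha(At^\alpha)x$ is analytic on $(0,\infty)$. The differential identity is cleanest through the Laplace transform: the representation of $E_\alpha(At^\alpha)x$ identifies its transform as $\lambda^{\alpha-1}(\lambda^\alpha-A)^{-1}x$, and combining the rule $\widehat{cD_t^\alpha u}=\lambda^\alpha\widehat{u}-\lambda^{\alpha-1}u(0)$ with $\lambda^\alpha(\lambda^\alpha-A)^{-1}=I+A(\lambda^\alpha-A)^{-1}$ shows that $cD_t^\alpha E_\alpha(At^\alpha)x$ and $AE_\alpha(At^\alpha)x$ share the transform $\lambda^{\alpha-1}A(\lambda^\alpha-A)^{-1}x$; uniqueness of the transform yields the identity, and the same computation applied to an arbitrary solution of $cD_t^\alpha v=Av$, $v(0)=x$, forces $v=E_\alpha(At^\alpha)x$. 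The obstacle in this part is rigor rather than idea: justifying the interchange of the nonlocal operator $cD_t^\alpha=\frac{d}{dt}J_t^{1-\alpha}$ with the contour integral, and validating the vector-valued Laplace-transform identities on $X$.
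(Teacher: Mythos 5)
The paper does not actually prove this theorem: its ``proof'' is a pointer to Theorem 2.1 of \cite{AnCaCaMa1} and Theorem 2.44 of \cite{Car1}, so there is no in-paper argument to compare against. Your self-contained contour route is the standard one and most of it is sound: the remark that the opening angle of $Ha$ must be taken in $(\pi/2,\theta]$ so that $\lambda^\alpha\in S_\theta$ for all $\lambda\in Ha$ and all $\alpha\in(0,1)$ is exactly the point that makes the resolvent bound usable; the rescaling $z=\lambda t$ onto a fixed Hankel path gives (iii) with a constant independent of $\alpha$ and $t$ (modulo the small slip that for $E_{\alpha,\alpha}$ the dominating integrand is $M_\theta|e^{z}|$ rather than $M_\theta|e^{z}||z|^{-1}$, which changes nothing); and the $D(A)$-plus-density argument, with the Hankel integral $\frac{1}{2\pi i}\int e^{z}z^{-1}\,dz=1$ as leading term, correctly yields (i)--(ii) for $E_\alpha$. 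You are also right to flag the $E_{\alpha,\alpha}$ normalization: the same computation gives $\lim_{t\to0^+}E_{\alpha,\alpha}(At^\alpha)x=\frac{1}{\Gamma(\alpha)}x$, so items (i)--(ii) as stated for $E_{\alpha,\alpha}$ hold only up to the factor $\Gamma(\alpha)\neq1$ when $\alpha\in(0,1)$. That is a defect of the statement, not of your argument, and it is harmless for the rest of the paper, which only ever uses the uniform bound (iii) for $E_{\alpha,\alpha}$ and the strong continuity of $E_\alpha$ at $t=0$.

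The one genuine gap is the uniqueness claim in (iv). Taking the Laplace transform of ``an arbitrary solution $v$ of $cD^\alpha_t v=Av$, $v(0)=x$'' presupposes that $v$ is Laplace transformable (measurable of subexponential growth), which is not among the hypotheses and is precisely the kind of a priori information a uniqueness proof may not assume; note also that a naive Gronwall bound on the equivalent Volterra equation $v(t)=x+\frac{1}{\Gamma(\alpha)}\int_0^t(t-s)^{\alpha-1}Av(s)\,ds$ does not close either, because $A$ is unbounded. A correct argument must first fix the solution class (e.g.\ $v\in C([0,T];X)$ with $v(t)\in D(A)$ and $Av$ continuous) and then either establish subexponential growth within that class before transforming, or run the time-domain comparison used in \cite{AnCaCaMa1}/\cite{Car1}. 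Likewise, the identity $cD^\alpha_tE_\alpha(At^\alpha)x=AE_\alpha(At^\alpha)x$ should be obtained by computing $J_t^{1-\alpha}\big[E_\alpha(At^\alpha)x-x\big]$ under the contour integral via Fubini (using $\int_0^t(t-s)^{-\alpha}e^{\lambda s}\,ds$) and then differentiating, rather than by an unproved vector-valued transform rule; both steps are routine but must be written out for the proof to be complete.
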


\begin{proof} See Theorem 2.1 in \cite{AnCaCaMa1} and Theorem 2.44 in \cite{Car1}.
\end{proof}

\begin{remark}
For a more complete survey on the special functions named Mittag-Leffler, I suggest the very interesting book \cite{GoKiMaRo1}.
\end{remark}

\section{The Limit Problems}
\label{sec3}

Like it was pointed out in Section \ref{sec1}, here I focus my attention in the Limit Problems. At first, I would like to emphasize that the study I am proposing here was already discussed in the literature, however seeking to answer other questions. I can cite as examples two papers, namely \cite{AlFe1,LiZh1}.

For instance, Almeida-Ferreira in \cite{AlFe1} studied this kind of limit questions involving the solutions of the semilinear fractional partial differential equation,
$$\left\{\begin{array}{ll}cD_t^{1+\alpha} u(t,x)=\triangle_xu(t,x)+|u(t,x)|^\rho u(t,x), &(t,x)\in[0,\infty)\times\mathbb{R}^n,\\
u(0,x)=u_0\textrm{ and }\partial_t u(0,x)=0,& x\in\mathbb{R}^n,\end{array}\right.$$
for $\alpha\in(0,1)$, which interpolates the semilinear heat and wave equations. They studied the existence, uniqueness and regularity of solution to this problem in Morrey spaces and then showed that when $\alpha\rightarrow1^-$, the solution of the problem loses its native regularity. On the other hand, in \cite{LiZh1} the authors discuss these kind of limits to $\alpha-$times resolvent families, which are not our main objective here.

\subsection{Questions Q1) and Q2)}

In order to give a complete answer, let me begin by proving the following limit theorems.

\begin{theorem}\label{mittag1limit} Let $A:D(A)\subset X\rightarrow X$ be a sectorial operator and $\{e^{At}:t\geq0\}$ the $C_0-$semigroup generated by $A$. Now assume that $\alpha\in(0,1)$ and consider the Mittag-Leffler family of operators $\{E_\alpha(A t^\alpha):t\geq0\}$. Then, for each $t\geq0$ I have that
$$\lim_{\alpha\rightarrow1^-}{\|E_\alpha(A t^\alpha)-e^{At}\|_{\mathcal{L}(X)}}=0.$$
Moreover, the convergence is uniform on every compact $S\subset(0,\infty)$, i.e.,
$$\lim_{\alpha\rightarrow1^-}\left\{{\max_{s\in S}\big[\|E_\alpha(A s^\alpha)-e^{As}\|_{\mathcal{L}(X)}\big]}\right\}=0.$$
\end{theorem}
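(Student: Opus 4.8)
The plan is to exploit the fact that both families admit contour representations over a Hankel path and to realize their difference as a single integral whose integrand decays suitably. First I would fix an angle $\theta_0\in(\pi/2,\theta)$ and an $\epsilon>0$, and let $Ha$ be the associated Hankel path. Since $\theta_0<\theta$ and $\alpha\in(0,1)$, one has $\alpha\theta_0<\theta_0<\theta$, so that both $\lambda$ and $\lambda^\alpha$ lie in $S_\theta\subset\rho(A)$ for every $\lambda\in Ha$; by Cauchy's theorem the representations of Theorems \ref{1.103} and \ref{2.7} remain valid over this common contour, for every $\alpha$ sufficiently close to $1$. I can then write, for $t>0$,
$$E_\alpha(At^\alpha)-e^{At}=\frac{1}{2\pi i}\int_{Ha}e^{\lambda t}\Big[\lambda^{\alpha-1}(\lambda^\alpha-A)^{-1}-(\lambda-A)^{-1}\Big]\,d\lambda,$$
and the whole problem reduces to estimating this integral.

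Second, I would establish pointwise (in $t$) convergence by a dominated convergence argument. Writing $\lambda^\alpha=\exp(\alpha\,\mathrm{Log}\,\lambda)$ (principal branch, licit since $|\arg\lambda|\le\theta_0<\pi$), one has $\lambda^{\alpha-1}\to1$ and $\lambda^\alpha\to\lambda$ as $\alpha\to1^-$ for each fixed $\lambda\in Ha$. Combining this with the resolvent identity
$$(\lambda^\alpha-A)^{-1}-(\lambda-A)^{-1}=(\lambda-\lambda^\alpha)(\lambda^\alpha-A)^{-1}(\lambda-A)^{-1}$$
and the sectorial bound $\|(\mu-A)^{-1}\|\le M_\theta/|\mu|$, the bracketed operator tends to $0$ in $\mathcal{L}(X)$ pointwise in $\lambda$. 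For the domination, the same sectorial estimate yields $\|\lambda^{\alpha-1}(\lambda^\alpha-A)^{-1}\|\le M_\theta/|\lambda|$ and $\|(\lambda-A)^{-1}\|\le M_\theta/|\lambda|$, both uniform in $\alpha$; along the rays $\lambda=re^{\pm i\theta_0}$ one has $|e^{\lambda t}|=e^{tr\cos\theta_0}$ with $\cos\theta_0<0$, so the integrand is dominated by $(2M_\theta/r)e^{tr\cos\theta_0}$, which is integrable over $[\epsilon,\infty)$ for each $t>0$ (the arc $Ha_2$ contributing a harmless bounded term). Dominated convergence then gives $\|E_\alpha(At^\alpha)-e^{At}\|_{\mathcal{L}(X)}\to0$; the case $t=0$ is immediate since both operators equal $I$ by Theorem \ref{2.7}(i).

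Third, for the uniform statement on a compact $S\subset(0,\infty)$, say $S\subset[a,b]$ with $0<a\le b$, I would split the contour at a radius $R$. On the tail $|\lambda|>R$ the dominating bound $(2M_\theta/r)e^{ar\cos\theta_0}$ (using $t\ge a$ and $\cos\theta_0<0$) is independent of both $s$ and $\alpha$, so its integral can be made smaller than any prescribed $\varepsilon/2$ by choosing $R$ large. On the remaining compact portion of $Ha$, the convergence of the bracketed operator to $0$ is uniform for $\epsilon\le|\lambda|\le R$---this follows from the same resolvent identity, since $|\lambda-\lambda^\alpha|\to0$ and $|\lambda^{\alpha-1}-1|\to0$ uniformly on that set---while $|e^{\lambda s}|$ stays bounded for $s\in[a,b]$; hence for $\alpha$ near $1$ this part is below $\varepsilon/2$ uniformly in $s\in S$. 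Combining the two bounds yields the claimed uniform convergence.

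I expect the main obstacle to be the bookkeeping that guarantees a single contour serves both representations simultaneously for all $\alpha$ near $1$ (the interplay $\alpha\theta_0<\theta$ together with the contour deformation), and verifying that the convergence of the integrand is genuinely uniform on the compact part of $Ha$; the decay at infinity and the pointwise step are routine once the sectorial estimates are in place.
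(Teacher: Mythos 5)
Your proposal is correct and follows essentially the same route as the paper's proof: both write the difference as a Hankel-contour integral, split $\lambda^{\alpha-1}(\lambda^{\alpha}-A)^{-1}-(\lambda-A)^{-1}$ via the first resolvent identity into $\lambda^{\alpha-1}(\lambda-\lambda^{\alpha})(\lambda^{\alpha}-A)^{-1}(\lambda-A)^{-1}$ plus $(\lambda^{\alpha-1}-1)(\lambda-A)^{-1}$, and conclude from the sectorial resolvent bounds and dominated convergence. The only cosmetic difference lies in how uniformity over the compact $S$ is secured --- you truncate the contour at a large radius $R$ and argue uniform convergence of the integrand on the compact piece, while the paper replaces $t$ by $\min S$ in the exponential to obtain $t$-independent majorants $\mathcal{J}_i(\alpha)\to 0$ --- and the two devices are interchangeable here.
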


\begin{proof} Let me assume that $x\in X$ and $t>0$. Consider $\theta\in(\pi/2,\pi)$, $S_\theta$ the sector associated with the sectorial operator $A$ and $M_\theta>0$ such that
  \begin{equation}\label{secprop}\|(\lambda-A)^{-1}\|_{\mathcal{L}(X)}\leq\dfrac{M_\theta}{|\lambda|},\,\,\,\,\forall\,\lambda\in S_{\theta}.\end{equation}
  Choose $\epsilon>1$ and consider the Hankel's path $Ha=Ha_1+Ha_2-Ha_3$, where
  \begin{multline*}Ha_1:=\{te^{i\theta}:t\in[\epsilon,\infty)\},\
  Ha_2:=\{\epsilon e^{it}:t\in[-\theta,\theta]\},\\
  Ha_3:=\{te^{-i\theta}:t\in[\epsilon,\infty)\}.
  \end{multline*}

The choices made above ensures that $Ha\subset\rho(A)$, therefore observe by Theorem \ref{1.103} and Theorem \ref{2.7} that
\begin{multline*}E_\alpha(A t^\alpha)x-e^{At}x=\dfrac{1}{2\pi i}\displaystyle\int_{Ha}e^{\lambda t}\Big[\lambda^{\alpha-1}(\lambda^{\alpha}-A)^{-1}x-\lambda^{\alpha-1}(\lambda-A)^{-1}x\Big]\,d\lambda\\
+\dfrac{1}{2\pi i}\displaystyle\int_{Ha}e^{\lambda t}\Big[\lambda^{\alpha-1}(\lambda-A)^{-1}x-(\lambda-A)^{-1}x\Big]\,d\lambda,\end{multline*}
what is equivalent to
\begin{multline*}E_\alpha(A t^\alpha)x-e^{At}x\\=\sum_{j=1}^3\left\{\dfrac{1}{2\pi i}\displaystyle\int_{Ha_i}e^{\lambda t}\Big[\lambda^{\alpha-1}(\lambda^{\alpha}-A)^{-1}x-\lambda^{\alpha-1}(\lambda-A)^{-1}x\Big]\,d\lambda\right.\\
\left.+\dfrac{1}{2\pi i}\displaystyle\int_{Ha_i}e^{\lambda t}\Big[\lambda^{\alpha-1}(\lambda-A)^{-1}x-(\lambda-A)^{-1}x\Big]\,d\lambda\right\}.\end{multline*}

Using the first resolvent identity (see \cite[Lemma 6]{DuSc1}) in the first term in the right side of the above equality, I obtain the identity
\begin{multline}\label{newlimite01}E_\alpha(A t^\alpha)x-e^{At}x\\=\dfrac{1}{2\pi i}\left\{\sum_{j=1}^3\left\{\displaystyle\int_{Ha_i}e^{\lambda t}\Big[\lambda^{\alpha-1}[\lambda-\lambda^{\alpha}](\lambda^{\alpha}-A)^{-1}(\lambda-A)^{-1}x\Big]\,d\lambda\right.\right.\\
\left.\left.+\displaystyle\int_{Ha_i}e^{\lambda t}[\lambda^{\alpha-1}-1](\lambda-A)^{-1}x\,d\lambda\right\}\right\}.\end{multline}

Now assume that $S\subset(0,\infty)$ is a compact set and define the real values
$$M_1=\min\{s:s\in S\}\quad\textrm{e}\quad M_2=\max\{\epsilon\cos(s)t:s\in [-\theta,\theta]\textrm{ and }t\in S\}.$$

If $t\in S$, I have that:\vspace*{0,3cm}

\noindent (i) Over $Ha_1$
  \begin{multline*} \int_{Ha_1}e^{\lambda t}\Big\{\lambda^{\alpha-1}[\lambda-\lambda^{\alpha}](\lambda^{\alpha}-A)^{-1}(\lambda-A)^{-1}x\Big\}\,d\lambda\\
  =\int_{\epsilon}^\infty e^{(\tau e^{i\theta}) t}\Big\{\big(\tau e^{i\theta}\big)^{\alpha-1}\Big[\big(\tau e^{i\theta}\big)-\big(\tau e^{i\theta}\big)^{\alpha}\Big]\\\times\Big(\big(\tau e^{i\theta}\big)^{\alpha}-A\Big)^{-1}\Big(\big(\tau e^{i\theta}\big)-A\Big)^{-1}x\Big\}e^{i\theta}\,d\tau,\end{multline*}
and
  \begin{multline*} \displaystyle\int_{Ha_1}e^{\lambda t}[\lambda^{\alpha-1}-1](\lambda-A)^{-1}x\,d\lambda
  \\=\int_{\epsilon}^\infty e^{(\tau e^{i\theta}) t}\Big\{\Big[\big(\tau e^{i\theta}\big)^{\alpha-1}-1\Big]\Big(\big(\tau e^{i\theta}\big)-A\Big)^{-1}x\Big\}e^{i\theta}\,d\tau,\end{multline*}
from which, by considering the sectorial operator estimate \eqref{secprop}, I deduce
  \begin{multline*}\left\|\int_{Ha_1}e^{\lambda t}\Big\{\lambda^{\alpha-1}[\lambda-\lambda^{\alpha}](\lambda^{\alpha}-A)^{-1}(\lambda-A)^{-1}x\Big\}\,d\lambda\right\|\\
  \leq (M_\theta)^2\|x\|\left[\int_{\epsilon}^\infty \dfrac{e^{[\tau \cos(\theta)] M_1}\Big|\big(\tau e^{i\theta}\big)-\big(\tau e^{i\theta}\big)^{\alpha}\Big|}{\tau^2}\,d\tau\right]=:\mathcal{J}_1(\alpha)\|x\|,\end{multline*}
for every $t\in S$, and
  \begin{multline*} \left\|\displaystyle\int_{Ha_1}e^{\lambda t}[\lambda^{\alpha-1}-1](\lambda-A)^{-1}x\,d\lambda\right\|
  \\\leq M_\theta\|x\|\left[\int_{\epsilon}^\infty \dfrac{e^{[\tau \cos(\theta)] M_1}\Big|\big(\tau e^{i\theta}\big)^{\alpha-1}-1\Big|}{\tau^2}\,d\tau \right]=:\mathcal{J}_2(\alpha)\|x\|,\end{multline*}
for every $t\in S$.\vspace*{0,3cm}

\noindent (ii)  Over $Ha_2$
  \begin{multline*} \int_{Ha_2}e^{\lambda t}\Big\{\lambda^{\alpha-1}[\lambda-\lambda^{\alpha}](\lambda^{\alpha}-A)^{-1}(\lambda-A)^{-1}x\Big\}\,d\lambda\\
  =\int_{-\theta}^\theta e^{(\epsilon e^{i\tau }) t}\Big\{\big(\epsilon e^{i\tau }\big)^{\alpha-1}\Big[\big(\epsilon e^{i\tau }\big)-\big(\epsilon e^{i\tau }\big)^{\alpha}\Big]\\\times\Big(\big(\epsilon e^{i\tau }\big)^{\alpha}-A\Big)^{-1}\Big(\big(\epsilon e^{i\tau }\big)-A\Big)^{-1}x\Big\}i\epsilon e^{i\tau }\,d\tau,\end{multline*}
and
  \begin{multline*} \displaystyle\int_{Ha_2}e^{\lambda t}[\lambda^{\alpha-1}-1](\lambda-A)^{-1}x\,d\lambda
  \\=\int_{-\theta}^\theta e^{(\epsilon e^{i\tau }) t}\Big\{\Big[\big(\epsilon e^{i\tau }\big)^{\alpha-1}-1\Big]\Big(\big(\epsilon e^{i\tau }\big)-A\Big)^{-1}x\Big\}i\epsilon e^{i\tau }\,d\tau,\end{multline*}
by considering the sectorial operator estimate \eqref{secprop}, I deduce
  \begin{multline*}\left\|\int_{Ha_2}e^{\lambda t}\Big\{\lambda^{\alpha-1}[\lambda-\lambda^{\alpha}](\lambda^{\alpha}-A)^{-1}(\lambda-A)^{-1}x\Big\}\,d\lambda\right\|\\
  \leq \dfrac{(M_\theta)^2\|x\|}{\epsilon}\left[\int_{-\theta}^\theta e^{[\epsilon\cos(\tau)] t}\Big|\big(\epsilon e^{i\tau }\big)-\big(\epsilon e^{i\tau }\big)^{\alpha}\Big|\,d\tau\right]\\
  \leq \dfrac{(M_\theta)^2\|x\|}{\epsilon}\left[\int_{-\theta}^\theta e^{M_2}\Big|\big(\epsilon e^{i\tau }\big)-\big(\epsilon e^{i\tau }\big)^{\alpha}\Big|\,d\tau\right]=:\mathcal{J}_3(\alpha)\|x\|,\end{multline*}
for every $t\in S$, and
  \begin{multline*} \left\|\displaystyle\int_{Ha_2}e^{\lambda t}[\lambda^{\alpha-1}-1](\lambda-A)^{-1}x\,d\lambda\right\|
  \\\leq \dfrac{M_\theta\|x\|}{\epsilon}\left[\int_{-\theta}^\theta e^{M_2}\Big|\big(\epsilon e^{i\tau }\big)^{\alpha-1}-1\Big|\,d\tau\right]=:\mathcal{J}_4(\alpha)\|x\|.\end{multline*}
for every $t\in S$. \vspace*{0,3cm}

\noindent (iii)  Over $Ha_3$ it is analogous to item (i), therefore
  \begin{multline*}\left\|\int_{Ha_1}e^{\lambda t}\Big\{\lambda^{\alpha-1}[\lambda-\lambda^{\alpha}](\lambda^{\alpha}-A)^{-1}(\lambda-A)^{-1}x\Big\}\,d\lambda\right\|\\
  \leq (M_\theta)^2\|x\|\left[\int_{\epsilon}^\infty \dfrac{e^{[\tau \cos(\theta)] M_1}\Big|\big(\tau e^{-i\theta}\big)-\big(\tau e^{-i\theta}\big)^{\alpha}\Big|}{\tau^2}\,d\tau\right]=:\mathcal{J}_5(\alpha)\|x\|,\end{multline*}
and
  \begin{multline*} \left\|\displaystyle\int_{Ha_1}e^{\lambda t}[\lambda^{\alpha-1}-1](\lambda-A)^{-1}x\,d\lambda\right\|
  \\\leq M_\theta\|x\|\left[\int_{\epsilon}^\infty \dfrac{e^{[\tau \cos(\theta)] M_1}\Big|\big(\tau e^{-i\theta}\big)^{\alpha-1}-1\Big|}{\tau^2}\,d\tau\right]=:\mathcal{J}_6(\alpha)\|x\|.\end{multline*}

Therefore, I can estimate \eqref{newlimite01} by
$$\|E_\alpha(A t^\alpha)-e^{At}\|_{\mathcal{L}(X)}\leq\sum_{i=1}^6\mathcal{J}_i(\alpha),$$
for every $t\in S$.

A consequence of the dominated convergence theorem ensures that
 $$\lim_{\alpha\rightarrow 1^-}\mathcal{J}_i(\alpha)=0,$$
for every $i\in\{1,\ldots,6\}$, therefore I deduce that
$$\lim_{\alpha\rightarrow1^-}\left\{{\max_{s\in S}\big[\|E_\alpha(A s^\alpha)-e^{As}\|_{\mathcal{L}(X)}\big]}\right\}=0.$$
This is the second part of the proof. The first part, for the case $t>0$, follows from this second part, while the first part is trivial when $t=0$.
\end{proof}

\begin{theorem}\label{mittag1.5limit} Consider $A:D(A)\subset X\rightarrow X$ a sectorial operator, $\{e^{At}:t\geq0\}$ the $C_0-$semigroup generated by $A$ and $\{E_\alpha(A t^\alpha):t\geq0\}$ the Mittag-Leffler family of operators. Now assume that:
\begin{itemize}
\item[(i)] $\{\alpha_n\}_{n=1}^\infty\subset(0,1]$ and $\alpha\in(0,1]$ is such that
$$\lim_{n\rightarrow\infty}\alpha_n=\alpha;$$
\item[(ii)] $\{\sigma_n\}_{n=1}^\infty\subset[0,\infty)$ such that
$$\lim_{n\rightarrow\infty}\sigma_n=0.$$
\end{itemize}
Then, it holds that
$$\lim_{n\rightarrow\infty}\|E_{\alpha_n}(A (\sigma_n)^{\alpha_n})-E_{\alpha}(A (\sigma_n)^{\alpha})\|_{\mathcal{L}(X)}=0.$$
\end{theorem}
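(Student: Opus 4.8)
The plan is to reduce everything to a single contour integral over a \emph{fixed}, $n$-independent Hankel path that carries genuine exponential decay, and then to pass to the limit under the integral sign. First I would dispose of the trivial indices: whenever $\sigma_n=0$ both operators equal $I_X$ by Theorem \ref{2.7}(i), so those differences vanish, and I may assume $\sigma_n>0$, eventually $\sigma_n<1$, and $\alpha_n\geq\alpha/2>0$. Starting from the representations in Theorem \ref{2.7} and performing the radial substitution $\lambda=z/\sigma_n$ (equivalently, taking the path $Ha$ with inner radius $\epsilon=1/\sigma_n$ and deforming back, which is legitimate because $z^{\beta}\sigma_n^{-\beta}\in S_\theta\subset\rho(A)$ for $z$ in the open sector and $z^{\beta-1}$ is analytic off the negative real axis), the factor $e^{\lambda\sigma_n}$ becomes $e^{z}$ and the $\sigma_n$-powers outside the resolvent cancel, yielding
$$E_{\alpha_n}(A\sigma_n^{\alpha_n})-E_{\alpha}(A\sigma_n^{\alpha})=\frac{1}{2\pi i}\int_{Ha}e^{z}\Big[z^{\alpha_n-1}\big(z^{\alpha_n}-\sigma_n^{\alpha_n}A\big)^{-1}-z^{\alpha-1}\big(z^{\alpha}-\sigma_n^{\alpha}A\big)^{-1}\Big]\,dz$$
on a now fixed Hankel path, where $|e^{z}|$ decays like $e^{\tau\cos\theta}$ along $Ha_1$ and $Ha_3$.

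Next I would build an $n$-independent integrable majorant. The key elementary estimate is that, applying the sectorial bound \eqref{secprop} with $\mu=z^{\beta}\sigma_n^{-\beta}$,
$$\big\|z^{\beta-1}\big(z^{\beta}-\sigma_n^{\beta}A\big)^{-1}\big\|_{\mathcal{L}(X)}=|z|^{\beta-1}\sigma_n^{-\beta}\big\|(z^{\beta}\sigma_n^{-\beta}-A)^{-1}\big\|_{\mathcal{L}(X)}\leq\frac{M_\theta}{|z|},$$
uniformly in $\beta\in\{\alpha_n,\alpha\}$ and in $n$ (the $\sigma_n$-powers cancel). Hence the integrand is dominated by $2M_\theta|e^{z}|/|z|$, which is integrable along $Ha$ by the exponential decay on the two rays together with the compactness of $Ha_2$. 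This supplies the dominating function needed for a dominated-convergence argument, exactly in the spirit of the proof of Theorem \ref{mittag1limit}.

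The remaining hypothesis of dominated convergence is the main obstacle: pointwise (in $z$) convergence of the bracketed integrand to $0$ in the operator norm. Writing $z^{\beta-1}(z^{\beta}-\sigma_n^{\beta}A)^{-1}=z^{-1}\big[I+A(w_n^{(\beta)}-A)^{-1}\big]$ with $w_n^{(\beta)}=z^{\beta}\sigma_n^{-\beta}\to\infty$, the leading $z^{-1}I$ terms cancel and one is left with
$$\frac{1}{z}\Big[w_n^{(\alpha_n)}\big(w_n^{(\alpha_n)}-A\big)^{-1}-w_n^{(\alpha)}\big(w_n^{(\alpha)}-A\big)^{-1}\Big].$$
By the first resolvent identity this has norm controlled by $\big|w_n^{(\alpha_n)}/w_n^{(\alpha)}-1\big|=\big|(z/\sigma_n)^{\alpha_n-\alpha}-1\big|$ times bounded resolvent factors, so the decisive quantity is $(\alpha_n-\alpha)\log(1/\sigma_n)$. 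This is where I expect the argument to be genuinely delicate: since $\log(1/\sigma_n)\to\infty$ while $\alpha_n-\alpha\to0$, their product is an indeterminate form, and it is precisely this interplay between the two limits that must be quantified. If one can show the product tends to $0$, then the integrand tends to $0$ in norm and dominated convergence closes the proof; otherwise, since $w_n^{(\beta)}(w_n^{(\beta)}-A)^{-1}\to I$ holds in the strong operator topology (a consequence of the density of $D(A)$), the same scheme still delivers $[E_{\alpha_n}(A\sigma_n^{\alpha_n})-E_{\alpha}(A\sigma_n^{\alpha})]x\to0$ for each fixed $x\in X$, i.e. convergence in the strong topology. I would therefore concentrate all the effort on this single step, as it is the crux on which the operator-norm statement rests.
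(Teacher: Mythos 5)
Your contour manipulations are all correct --- the rescaling to a fixed Hankel path, the $n$-independent majorant $2M_\theta|e^{z}|/|z|$, and the reduction of the pointwise limit to the size of $|(z/\sigma_n)^{\alpha_n-\alpha}-1|$ --- and the doubt you raise at the last step is not a gap you failed to close but a genuine obstruction: the statement is false in the operator norm. Take $X=L^{2}(1,\infty)$ and let $A$ be multiplication by $-\mu$, which is sectorial; then $E_{\beta}(At^{\beta})$ is multiplication by the scalar Mittag--Leffler function $E_{\beta}(-\mu t^{\beta})$, so
\begin{equation*}
\|E_{\alpha_n}(A(\sigma_n)^{\alpha_n})-E_{\alpha}(A(\sigma_n)^{\alpha})\|_{\mathcal{L}(X)}=\sup_{\mu\ge1}\big|E_{\alpha_n}(-\mu\sigma_n^{\alpha_n})-E_{\alpha}(-\mu\sigma_n^{\alpha})\big|.
\end{equation*}
For $\alpha\in(0,1)$, $\alpha_n=\alpha+1/n$ and $\sigma_n=e^{-n^{2}}$ (so that your critical quantity $(\alpha_n-\alpha)\log(1/\sigma_n)=n\to\infty$), evaluating at $\mu_n=\sigma_n^{-\alpha_n}\ge1$ gives $\mu_n\sigma_n^{\alpha_n}=1$ while $\mu_n\sigma_n^{\alpha}=e^{n}\to\infty$, so the supremum is at least $|E_{\alpha_n}(-1)-E_{\alpha}(-e^{n})|\to E_{\alpha}(-1)>0$. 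The paper's own proof does not escape this: its application of the dominated convergence theorem to the $\mathcal{J}_i(\alpha,\alpha_n)$ has no $n$-independent integrable majorant on the unbounded rays, where $e^{[\tau\cos\theta]\sigma_n}\to1$ pointwise as $\sigma_n\to0$ and the remaining integrand is only of order $\tau^{(\alpha_n-\alpha)-1}$ (the denominator $\tau^{2}$ written there should in fact be $\tau^{1+\alpha}$, since the second resolvent is $(\lambda^{\alpha}-A)^{-1}$ rather than $(\lambda-A)^{-1}$).

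Your fallback is exactly the right repair. Since $w_n^{(\beta)}=z^{\beta}\sigma_n^{-\beta}\to\infty$ inside the sector, uniformly for $z$ on the fixed path (because $\alpha_n\to\alpha>0$), the operators $A(w_n^{(\beta)}-A)^{-1}=w_n^{(\beta)}(w_n^{(\beta)}-A)^{-1}-I$ are bounded by $M_\theta+1$ and converge strongly to $0$ (on the dense set $D(A)$ one has $\|A(w-A)^{-1}x\|=\|(w-A)^{-1}Ax\|\le M_\theta\|Ax\|/|w|$), so your scheme with the majorant $2(M_\theta+1)|e^{z}|\,\|x\|/|z|$ yields $\|[E_{\alpha_n}(A(\sigma_n)^{\alpha_n})-E_{\alpha}(A(\sigma_n)^{\alpha})]x\|\to0$ for every fixed $x\in X$. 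This strong version is all that is used downstream --- in the proof of Theorem \ref{3.61045} the result is applied only to the single vector $u_0$ --- so you should state and prove that instead; the operator-norm conclusion would require an extra hypothesis such as $(\alpha_n-\alpha)\log\sigma_n\to0$, under which your estimate on $|(z/\sigma_n)^{\alpha_n-\alpha}-1|$ does close the argument.
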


\begin{proof} By following the same process presented in the proof of Theorem \ref{mittag1limit}, I deduce that
\begin{multline*}E_{\alpha_n}(A (\sigma_n)^{\alpha_n})x-E_\alpha(A (\sigma_n)^{\alpha})x\\=\dfrac{1}{2\pi i}\left\{\sum_{j=1}^3\left\{\displaystyle\int_{Ha_i}e^{\lambda \sigma_n}\Big[\lambda^{\alpha_n-1}[\lambda^{\alpha}-\lambda^{\alpha_n}](\lambda^{\alpha_n}-A)^{-1}(\lambda^\alpha-A)^{-1}x\Big]\,d\lambda\right.\right.\\
\left.\left.+\displaystyle\int_{Ha_i}e^{\lambda \sigma_n}[\lambda^{\alpha_n-1}-\lambda^{\alpha-1}](\lambda^\alpha-A)^{-1}x\,d\lambda\right\}\right\}.\end{multline*}

Last identity allows me to obtain the inequality
$$\|E_\alpha(-A t^\alpha)-e^{At}\|_{\mathcal{L}(X)}\leq\sum_{i=1}^6\mathcal{J}_i(\alpha,\alpha_n),$$
for every $t\in S$, where
\begin{multline*}\mathcal{J}_1(\alpha,\alpha_n)=(M_\theta)^2\left[\int_{\epsilon}^\infty \dfrac{e^{[\tau \cos(\theta)] \sigma_n}\Big|\big(\tau e^{i\theta}\big)^{\alpha}-\big(\tau e^{i\theta}\big)^{\alpha_n}\Big|}{\tau^2}\,d\tau\right],\\
\mathcal{J}_2(\alpha,\alpha_n)=M_\theta\left[\int_{\epsilon}^\infty \dfrac{e^{[\tau \cos(\theta)] \sigma_n}\Big|\big(\tau e^{i\theta}\big)^{\alpha_n-1}-\big(\tau e^{i\theta}\big)^{\alpha-1}\Big|}{\tau^2}\,d\tau \right],\end{multline*}
\begin{multline*}\mathcal{J}_3(\alpha,\alpha_n)=\dfrac{(M_\theta)^2}{\epsilon}\left[\int_{-\theta}^\theta e^{M_2}\Big|\big(\epsilon e^{i\tau }\big)^{\alpha}-\big(\epsilon e^{i\tau }\big)^{\alpha_n}\Big|\,d\tau\right],\\\mathcal{J}_4(\alpha,\alpha_n)=\dfrac{M_\theta}{\epsilon}\left[\int_{-\theta}^\theta e^{M_2}\Big|\big(\epsilon e^{i\tau }\big)^{\alpha_n-1}-\big(\epsilon e^{i\tau }\big)^{\alpha-1}\Big|\,d\tau\right],
\end{multline*}
\begin{multline*}\mathcal{J}_5(\alpha,\alpha_n)=(M_\theta)^2\left[\int_{\epsilon}^\infty \dfrac{e^{[\tau \cos(\theta)] \sigma_n}\Big|\big(\tau e^{-i\theta}\big)^{\alpha}-\big(\tau e^{-i\theta}\big)^{\alpha_n}\Big|}{\tau^2}\,d\tau\right],\\
\mathcal{J}_6(\alpha,\alpha_n)=M_\theta\left[\int_{\epsilon}^\infty \dfrac{e^{[\tau \cos(\theta)] \sigma_n}\Big|\big(\tau e^{-i\theta}\big)^{\alpha_n-1}-\big(\tau e^{-i\theta}\big)^{\alpha-1}\Big|}{\tau^2}\,d\tau \right].\end{multline*}

Again, the dominated convergence theorem ensures that
 $$\lim_{n\rightarrow\infty}\mathcal{J}_i(\alpha,\alpha_n)=0,$$
for every $i\in\{1,\ldots,6\}$, therefore I obtain
$$\lim_{n\rightarrow\infty}\|E_{\alpha_n}(A (\sigma_n)^{\alpha_n})-E_{\alpha}(A (\sigma_n)^{\alpha})\|_{\mathcal{L}(X)}=0.$$
\end{proof}

\begin{theorem}\label{mittag2limit} Let $A:D(A)\subset X\rightarrow X$ be a sectorial operator and $\{e^{At}:t\geq0\}$ the $C_0-$semigroup generated by $A$. Now assume that $\alpha\in(0,1)$ and consider the Mittag-Leffler family of operators $\{E_{\alpha,\alpha}(A t^\alpha):t\geq0\}$. Then, for each $t\geq0$ I have that
$$\lim_{\alpha\rightarrow1^-}{\|E_{\alpha,\alpha}(A t^\alpha)-e^{At}\|_{\mathcal{L}(X)}}=0.$$
Moreover, the convergence is uniform on every compact $S\subset(0,\infty)$, i.e.,
$$\lim_{\alpha\rightarrow1^-}\left\{{\max_{s\in S}\big[\|E_{\alpha,\alpha}(A s^\alpha)-e^{As}\|_{\mathcal{L}(X)}\big]}\right\}=0.$$
\end{theorem}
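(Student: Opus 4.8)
The plan is to mimic the proof of Theorem \ref{mittag1limit} almost verbatim, exploiting the integral representations furnished by Theorem \ref{1.103} and Theorem \ref{2.7}. Fix $x\in X$ and $t>0$, and choose $\theta$, $M_\theta$, $\epsilon>1$ together with the Hankel path $Ha=Ha_1+Ha_2-Ha_3$ exactly as before, so that both representations are valid along $Ha\subset\rho(A)$. Since the prefactor $t^{1-\alpha}$ sits outside the integral defining $E_{\alpha,\alpha}(At^\alpha)$, I would first absorb it into the integrand and write
$$E_{\alpha,\alpha}(At^\alpha)x - e^{At}x = \frac{1}{2\pi i}\int_{Ha}e^{\lambda t}\Big[t^{1-\alpha}(\lambda^\alpha - A)^{-1} - (\lambda - A)^{-1}\Big]x\,d\lambda.$$

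Next I would split the bracket by adding and subtracting $t^{1-\alpha}(\lambda-A)^{-1}x$, obtaining
$$t^{1-\alpha}\big[(\lambda^\alpha - A)^{-1} - (\lambda - A)^{-1}\big]x + (t^{1-\alpha} - 1)(\lambda - A)^{-1}x,$$
and then apply the first resolvent identity to the first summand, turning it into $t^{1-\alpha}(\lambda-\lambda^\alpha)(\lambda^\alpha-A)^{-1}(\lambda-A)^{-1}x$. This produces the exact analogue of \eqref{newlimite01}: a sum of two families of integrals, one with integrand $t^{1-\alpha}(\lambda-\lambda^\alpha)(\lambda^\alpha-A)^{-1}(\lambda-A)^{-1}x$ and one with integrand $(t^{1-\alpha}-1)(\lambda-A)^{-1}x$. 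The only structural change from Theorem \ref{mittag1limit} is that the scalar weights $\lambda^{\alpha-1}$ there are now replaced by the $\lambda$-independent weights $t^{1-\alpha}$ and $t^{1-\alpha}-1$.

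I would then restrict $t$ to a compact set $S\subset(0,\infty)$, set $M_1=\min S$ and $M_2=\max\{\epsilon\cos(s)t:s\in[-\theta,\theta],\,t\in S\}$ as before, and split each integral over $Ha_1$, $Ha_2$, $Ha_3$. Using the sectorial bounds $\|(\lambda-A)^{-1}\|\le M_\theta/|\lambda|$ and $\|(\lambda^\alpha-A)^{-1}\|\le M_\theta/|\lambda|^{\alpha}$ (valid since $\lambda^\alpha\in S_\theta$ along $Ha$ because $\alpha\theta<\theta$), together with the uniform bound $t^{1-\alpha}\le\max\{1,\max S\}$ for $\alpha$ near $1$, each of the six pieces is dominated by an $\alpha$-independent integrable function of $\tau$: over $Ha_1$ and $Ha_3$ the decay $e^{\tau\cos(\theta)M_1}$ (with $\cos\theta<0$) guarantees integrability, while over the compact arc $Ha_2$ one uses the constant bound $e^{M_2}$. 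This yields six quantities $\mathcal{J}_i(\alpha)$ with $\|E_{\alpha,\alpha}(As^\alpha)-e^{As}\|_{\mathcal{L}(X)}\le\sum_{i=1}^6\mathcal{J}_i(\alpha)$ for all $s\in S$.

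Finally, I would conclude by dominated convergence: as $\alpha\to1^-$ one has $|\lambda-\lambda^\alpha|\to0$ and $t^{1-\alpha}-1\to0$ pointwise, so every integrand tends to $0$ while remaining under the fixed dominating functions, whence $\mathcal{J}_i(\alpha)\to0$ for each $i$ and the uniform convergence on $S$ follows; the pointwise statement for a single $t>0$ is the special case $S=\{t\}$, and the case $t=0$ is immediate from part (i) of Theorem \ref{2.7}. The main technical point to verify is the uniform-in-$\alpha$ integrable domination of the first family near $\tau\to\infty$, where $|\lambda-\lambda^\alpha|\sim\tau$ competes with the resolvent decay $\tau^{-(1+\alpha)}$, leaving $\tau^{-\alpha}e^{\tau\cos(\theta)M_1}$; this is integrable for every $\alpha$ bounded away from $0$, and ensuring the bound is genuinely independent of $\alpha$ on a neighborhood of $1$ is where the estimate must be set up carefully, though it presents no essential difficulty beyond what already appears in Theorem \ref{mittag1limit}.
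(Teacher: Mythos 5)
Your proposal is correct and follows the same overall strategy as the paper's proof: represent both operators as Hankel-path integrals, subtract, apply the first resolvent identity, estimate separately over $Ha_1$, $Ha_2$, $Ha_3$ using the sectorial bound (noting $\lambda^\alpha\in S_\theta$ since $|\arg(\lambda^\alpha)|=\alpha|\arg\lambda|\leq\theta$), and conclude by dominated convergence with a $t$-independent dominating function built from $M_1=\min S$ and $M_2$. The one genuine difference is that you treat the prefactor $t^{1-\alpha}$ in the definition of $E_{\alpha,\alpha}(At^{\alpha})$ explicitly, whereas the paper's displayed identity
$$E_{\alpha,\alpha}(A t^\alpha)x-e^{At}x=\dfrac{1}{2\pi i}\int_{Ha}e^{\lambda t}\Big[(\lambda^{\alpha}-A)^{-1}x-(\lambda-A)^{-1}x\Big]\,d\lambda$$
silently drops that factor and consequently produces only the three resolvent-difference terms $\mathcal{J}_1,\mathcal{J}_2,\mathcal{J}_3$. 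Your two-term decomposition, which adds the family with integrand $(t^{1-\alpha}-1)(\lambda-A)^{-1}x$, is the more careful (and strictly speaking the correct) version of the computation; that extra family causes no trouble because $\sup_{t\in S}|t^{1-\alpha}-1|\rightarrow0$ as $\alpha\rightarrow1^-$ on any compact $S\subset(0,\infty)$, and $t^{1-\alpha}$ is bounded uniformly in $\alpha$ on such $S$, so all six pieces admit a common integrable majorant exactly as you describe. The final technical point you flag (uniform-in-$\alpha$ domination near $\tau\rightarrow\infty$) is handled as you say, since for $\tau\geq\epsilon>1$ one has $\tau^{-\alpha}\leq1$ and the exponential factor $e^{\tau\cos(\theta)M_1}$ already gives integrability.
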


\begin{proof} Like I did in the proof of Theorem \ref{mittag1limit}, let me assume that $x\in X$ and $t>0$. Consider $\theta\in(\pi/2,\pi)$, $S_\theta$ the sector associated with the sectorial operator $A$ and $M_\theta>0$ such that
  \begin{equation}\label{2secprop}\|(\lambda-A)^{-1}\|_{\mathcal{L}(X)}\leq\dfrac{M_\theta}{|\lambda|},\,\,\,\,\forall\,\lambda\in S_{\theta}.\end{equation}
  Choose $\epsilon>1$ and consider the Hankel's path $Ha=Ha_1+Ha_2-Ha_3$, where
  \begin{multline*}Ha_1:=\{te^{i\theta}:t\in[\epsilon,\infty)\},\
  Ha_2:=\{\epsilon e^{it}:t\in[-\theta,\theta]\},\\
  Ha_3:=\{te^{-i\theta}:t\in[\epsilon,\infty)\}.
  \end{multline*}

The choices made above ensures that $Ha\subset\rho(A)$, therefore observe by Theorem \ref{1.103} and Theorem \ref{2.7} that
$$E_{\alpha,\alpha}(A t^\alpha)x-e^{At}x=\dfrac{1}{2\pi i}\displaystyle\int_{Ha_i}e^{\lambda t}\Big[(\lambda^{\alpha}-A)^{-1}x-(\lambda-A)^{-1}x\Big]\,d\lambda,$$
what, thanks to the first resolvent identity (see \cite[Lemma 6]{DuSc1}), is equivalent to
\begin{equation*}E_{\alpha,\alpha}(A t^\alpha)x-e^{At}x=\dfrac{1}{2\pi i}\sum_{j=1}^3\left\{\displaystyle\int_{Ha_i}e^{\lambda t}[\lambda-\lambda^{\alpha}](\lambda^{\alpha}-A)^{-1}(\lambda-A)^{-1}x\,d\lambda\right\}.\end{equation*}

Now consider $S\subset(0,\infty)$ a compact set and
$$M_1=\min\{s:s\in S\}\quad\textrm{e}\quad M_2=\max\{\epsilon\cos(s)t:s\in [-\theta,\theta]\textrm{ and }t\in S\}.$$

If $t\in S$, I have that:\vspace*{0,3cm}

\noindent (i) Over $Ha_1$
  \begin{multline*} \int_{Ha_1}e^{\lambda t}\Big\{[\lambda-\lambda^{\alpha}](\lambda^{\alpha}-A)^{-1}(\lambda-A)^{-1}x\Big\}\,d\lambda\\
  =\int_{\epsilon}^\infty e^{(\tau e^{i\theta}) t}\Big\{\Big[\big(\tau e^{i\theta}\big)-\big(\tau e^{i\theta}\big)^{\alpha}\Big]\Big(\big(\tau e^{i\theta}\big)^{\alpha}-A\Big)^{-1}\Big(\big(\tau e^{i\theta}\big)-A\Big)^{-1}x\Big\}e^{i\theta}\,d\tau,\end{multline*}
from which, by considering the sectorial operator estimate \eqref{2secprop}, I deduce
  \begin{multline*}\left\|\int_{Ha_1}e^{\lambda t}\Big\{[\lambda-\lambda^{\alpha}](\lambda^{\alpha}-A)^{-1}(\lambda-A)^{-1}x\Big\}\,d\lambda\right\|\\
  \leq (M_\theta)^2\|x\|\left[\int_{\epsilon}^\infty \dfrac{e^{[\tau \cos(\theta)] M_1}\Big|\big(\tau e^{i\theta}\big)-\big(\tau e^{i\theta}\big)^{\alpha}\Big|}{\tau^{\alpha+1}}\,d\tau\right]=:\mathcal{J}_1(\alpha)\|x\|,\end{multline*}
for every $t\in S$.\vspace*{0,3cm}

\noindent (ii)  Over $Ha_2$
  \begin{multline*} \int_{Ha_2}e^{\lambda t}\Big\{[\lambda-\lambda^{\alpha}](\lambda^{\alpha}-A)^{-1}(\lambda-A)^{-1}x\Big\}\,d\lambda\\
  =\int_{-\theta}^\theta e^{(\epsilon e^{i\tau }) t}\Big\{\Big[\big(\epsilon e^{i\tau }\big)-\big(\epsilon e^{i\tau }\big)^{\alpha}\Big]\Big(\big(\epsilon e^{i\tau }\big)^{\alpha}-A\Big)^{-1}\Big(\big(\epsilon e^{i\tau }\big)-A\Big)^{-1}x\Big\}i\epsilon e^{i\tau }\,d\tau,\end{multline*}
and therefore, if I consider the sectorial operator estimate \eqref{2secprop}, I obtain
  \begin{multline*}\left\|\int_{Ha_2}e^{\lambda t}\Big\{[\lambda-\lambda^{\alpha}](\lambda^{\alpha}-A)^{-1}(\lambda-A)^{-1}x\Big\}\,d\lambda\right\|\\
  \leq (M_\theta)^2\epsilon^{-\alpha}\|x\|\left[\int_{-\theta}^\theta e^{M_2}\Big|\big(\epsilon e^{i\tau }\big)-\big(\epsilon e^{i\tau }\big)^{\alpha}\Big|\,d\tau\right]=:\mathcal{J}_2(\alpha)\|x\|,\end{multline*}
for every $t\in S$. \vspace*{0,3cm}

\noindent (iii)  Over $Ha_3$ it is analogous to item (i), therefore
  \begin{multline*}\left\|\int_{Ha_1}e^{\lambda t}\Big\{[\lambda-\lambda^{\alpha}](\lambda^{\alpha}-A)^{-1}(\lambda-A)^{-1}x\Big\}\,d\lambda\right\|\\
  \leq (M_\theta)^2\|x\|\left[\int_{\epsilon}^\infty \dfrac{e^{[\tau \cos(\theta)] M_1}\Big|\big(\tau e^{-i\theta}\big)-\big(\tau e^{-i\theta}\big)^{\alpha}\Big|}{\tau^{\alpha+1}}\,d\tau\right]=:\mathcal{J}_3(\alpha)\|x\|.\end{multline*}

Thus, I achieve the estimate
$$\|E_\alpha(A t^\alpha)-e^{At}\|_{\mathcal{L}(X)}\leq\mathcal{J}_1(\alpha)+\mathcal{J}_2(\alpha)+\mathcal{J}_3(\alpha),$$
for every $t\in S$.

A consequence of the dominated convergence theorem ensures
 $$\lim_{\alpha\rightarrow 1^-}\mathcal{J}_1(\alpha)=0,\quad\lim_{\alpha\rightarrow 1^-}\mathcal{J}_2(\alpha)=0,\quad\lim_{\alpha\rightarrow 1^-}\mathcal{J}_3(\alpha)=0,$$
therefore I have
$$\lim_{\alpha\rightarrow1^-}\left\{{\max_{s\in S}\big[\|E_{\alpha,\alpha}(A s^\alpha)-e^{As}\|_{\mathcal{L}(X)}\big]}\right\}=0.$$
This is the second part of the proof. The first part, for the case $t>0$, follows from this second part, while the first part is trivial when $t=0$.
\end{proof}

%
%

A natural question that arises at this point of this manuscript is: Is it possible to obtain other modes of convergence to these operators? To answer this question, I begin by presenting the following notion.

\begin{definition} Let $S\subset\mathbb{R}$ and $p\geq 1$. The symbol $L^{p}(S;\mathcal{L}(X))$ is used to represent the set of all Bochner measurable functions $f:S\rightarrow \mathcal{L}(X)$ for which $\|f\|_{\mathcal{L}(X)}\in L^{p}(S;\mathbb{R})$, where $L^{p}(S;\mathbb{R})$ stands for the classical Lebesgue space. Moreover, $L^{p}(S;\mathcal{L}(X))$ is a Banach space when considered with the norm
$$\|f\|_{L^{p}(S;\mathcal{L}(X))}:=\left[\int_S{\|f(s)\|^p_{\mathcal{L}(X)}}\,ds\right]^{1/p}.$$
The vectorial spaces $L^{p}(S;\mathcal{L}(X))$ are called Bochner-Lebesgue spaces.
\end{definition}

For more details on the Bochner measurable functions and Bochner-Lebesgue integrable functions, I may refer to \cite{ArBaHiNe1,Mik1} and references therein.

Now I present the last theorem of this subsection.

\begin{theorem}\label{intemit} Let $A:D(A)\subset X\rightarrow X$ be a sectorial operator and $\{e^{At}:t\geq0\}$ the $C_0-$semigroup generated by $A$. Now assume that $\alpha\in(0,1)$ and consider the Mittag-Leffler families $\{E_{\alpha}(A t^\alpha):t\geq0\}$ and $\{E_{\alpha,\alpha}(A t^\alpha):t\geq0\}$.
If $p\geq1$ and $S\subset\mathbb{R}^+$ is compact, then
$$e^{At}\in L^{p}(S;\mathcal{L}(X)),\quad E_{\alpha}(A t^\alpha)\in L^{p}(S;\mathcal{L}(X))\quad\textrm{and}\quad E_{\alpha,\alpha}(A t^\alpha)\in L^{p}(S;\mathcal{L}(X)).$$
Moreover, it holds that
\begin{multline*}\lim_{\alpha\rightarrow1^-}\left[\int_S{\|E_{\alpha}(A s^\alpha)-e^{As}\|^p_{\mathcal{L}(X)}}\,ds\right]^{1/p}=0,\\
\lim_{\alpha\rightarrow1^-}\left[\int_S{\|E_{\alpha,\alpha}(A s^\alpha)-e^{As}\|^p_{\mathcal{L}(X)}}\,ds\right]^{1/p}=0.\end{multline*}
\end{theorem}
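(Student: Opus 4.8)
The plan is to establish the membership assertions first and then the two limits, both of which reduce to combining the uniform bounds of Theorems \ref{1.103} and \ref{2.7} with the uniform-on-compacta convergence already proved in Theorems \ref{mittag1limit} and \ref{mittag2limit}.

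For membership I would first note that each of the three families is bounded in $\mathcal{L}(X)$ uniformly in $s$: Theorem \ref{1.103} yields $\|e^{As}\|_{\mathcal{L}(X)}\le M_1$, while Theorem \ref{2.7}(iii) gives $\|E_{\alpha}(As^{\alpha})\|_{\mathcal{L}(X)}\le M_2$ and $\|E_{\alpha,\alpha}(As^{\alpha})\|_{\mathcal{L}(X)}\le M_2$. I would then argue that each family is continuous in the operator-norm topology on $(0,\infty)$. Starting from the Hankel-path representations, the difference of the integrands at two nearby values of $s$ is, for $s$ ranging over a compact subinterval $[a,b]\subset(0,\infty)$, dominated by a fixed integrable function: on the rays $Ha_1,Ha_3$ one uses the decay $|e^{\lambda s}|=e^{\tau\cos(\theta)s}\le e^{\tau\cos(\theta)a}$ together with $\|(\lambda-A)^{-1}\|_{\mathcal{L}(X)}\le M_\theta/|\lambda|$, and on the arc $Ha_2$ the integrand is continuous on a finite interval; the dominated convergence theorem then gives norm-continuity. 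A norm-continuous $\mathcal{L}(X)$-valued function on an interval is Bochner measurable with separable range, and altering it on the measure-zero set $\{0\}$ (should $0\in S$) does not affect measurability, so each map $s\mapsto e^{As}$, $s\mapsto E_{\alpha}(As^{\alpha})$, $s\mapsto E_{\alpha,\alpha}(As^{\alpha})$ is Bochner measurable on $S$. Combined with the uniform bound $C\in\{M_1,M_2\}$ and $|S|<\infty$, this gives $\int_S\|\cdot\|^p_{\mathcal{L}(X)}\,ds\le C^p|S|<\infty$, so all three functions lie in $L^p(S;\mathcal{L}(X))$.

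For the limits I would treat $E_{\alpha}$ in detail; the statement for $E_{\alpha,\alpha}$ follows verbatim with Theorem \ref{mittag2limit} replacing Theorem \ref{mittag1limit}. Fix $\epsilon>0$. Since Theorem \ref{mittag1limit} supplies uniform convergence only on compact subsets of $(0,\infty)$, the convergence may degenerate near $s=0$, so I would split $S=(S\cap[0,\delta])\cup(S\cap[\delta,\infty))$ for a small $\delta>0$ to be chosen. On $S\cap[0,\delta]$ I would use only the crude uniform bound $\|E_{\alpha}(As^{\alpha})-e^{As}\|_{\mathcal{L}(X)}\le M_1+M_2$, so that $\int_{S\cap[0,\delta]}\|E_{\alpha}(As^{\alpha})-e^{As}\|^p_{\mathcal{L}(X)}\,ds\le(M_1+M_2)^p\,\delta$, which is made smaller than $\epsilon^p/2$ by choosing $\delta$ small. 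The set $S\cap[\delta,\infty)$ is a compact subset of $(0,\infty)$, so Theorem \ref{mittag1limit} gives $\max_{s\in S\cap[\delta,\infty)}\|E_{\alpha}(As^{\alpha})-e^{As}\|_{\mathcal{L}(X)}\to0$ as $\alpha\to1^-$; hence $\int_{S\cap[\delta,\infty)}\|E_{\alpha}(As^{\alpha})-e^{As}\|^p_{\mathcal{L}(X)}\,ds\le|S|\big(\max_{s\in S\cap[\delta,\infty)}\|E_{\alpha}(As^{\alpha})-e^{As}\|_{\mathcal{L}(X)}\big)^p<\epsilon^p/2$ for $\alpha$ sufficiently close to $1$. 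Adding the two pieces yields $\int_S\|E_{\alpha}(As^{\alpha})-e^{As}\|^p_{\mathcal{L}(X)}\,ds<\epsilon^p$, so the $L^p$-norm is $<\epsilon$, which proves the first limit, and the second follows in the same way.

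The main obstacle, and essentially the only point requiring care, is the loss of norm-convergence as $s\to0^+$: the families are only strongly continuous at the origin, so no uniform control is available there. The device that resolves it is to confine this bad region to a set of arbitrarily small Lebesgue measure, on which the uniform bound $M_1+M_2$ renders the $p$-th power integral as small as desired; away from the origin everything is a routine application of the uniform convergence on compacta already established.
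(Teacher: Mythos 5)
Your proof is correct, but it takes a genuinely different route from the paper's. For the membership claims the paper does not invoke the uniform bounds of Theorems \ref{1.103} and \ref{2.7}(iii) at all; it re-derives explicit pointwise estimates for $\|e^{At}\|_{\mathcal{L}(X)}$ from the Hankel-path representation and then applies Minkowski's integral inequality to bound the $L^p$-norm by a convergent $\tau$-integral, repeating this for both Mittag-Leffler families. For the limits the paper again works directly with the Hankel-path estimates from the proof of Theorem \ref{mittag1limit}: it uses Minkowski's integral inequality to interchange the $L^p(S)$-norm with the integral over the rays, producing dominating quantities $\mathcal{I}_\alpha,\mathcal{J}_\alpha,\mathcal{K}_\alpha$ that are independent of $t$ and tend to $0$ by dominated convergence in $\alpha$; crucially, these remain finite even when $\min S=0$, which is how the paper absorbs the origin. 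You instead quarantine the origin in a set $S\cap[0,\delta]$ of small measure, control it by the uniform operator bounds (which Theorem \ref{2.7}(iii) explicitly provides uniformly in $\alpha$), and invoke the already-proved uniform convergence on the compact set $S\cap[\delta,\infty)\subset(0,\infty)$. Your argument is more modular and more elementary --- it reuses Theorems \ref{mittag1limit} and \ref{mittag2limit} as black boxes and avoids Minkowski's integral inequality entirely --- at the cost of having to supply the Bochner measurability of the operator-valued maps separately (which you do, correctly, via norm-continuity on $(0,\infty)$), whereas the paper's computation yields measurability and integrability in one stroke and produces explicit quantitative majorants. Both proofs cover the case $0\in S$, which is the only point where the result genuinely goes beyond Theorems \ref{mittag1limit} and \ref{mittag2limit}.
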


\begin{proof} Like it was done in the proof of Theorem \ref{mittag1limit}, if I choose $\epsilon>1$ and $\theta\in(\pi/2,\pi)$, with the aid of  Theorem \ref{1.103}, I can deduce the existence of $\widetilde{M}>0$ such that
$$ \|e^{At}\|_{\mathcal{L}(X)}\leq \widetilde{M}\left[\int_{\epsilon}^\infty \dfrac{e^{[\tau \cos(\theta)] t}}{\tau}\,d\tau+\int_{-\theta}^\theta e^{[\epsilon\cos(\tau)] t}\,d\tau\right]$$
for every $t\in S$. Thus, by considering Minkowski's integral inequality (see Theorem 202 in \cite{HaLiPo1}) and the triangular inequality of $L^p(S;X)$, I obtain
\begin{multline*}\hspace*{-0.3cm}\left[\int_S\|e^{At}\|^p_{\mathcal{L}(X)}\,dt\right]^{1/p}\leq\widetilde{M}\left[\int_S\left(\int_{\epsilon}^\infty \dfrac{e^{[\tau \cos(\theta)] t}}{\tau}\,d\tau+\int_{-\theta}^\theta e^{[\epsilon\cos(\tau)] t}\,d\tau\right)^p\,dt\right]^{1/p}\\
\leq\widetilde{M}\left[\int_{\epsilon}^\infty\left(\dfrac{1}{\tau}\right)\left(\int_S {e^{p[\tau \cos(\theta)] t}}\,dt\right)^{1/p}\,d\tau+\int_{-\theta}^\theta\left(\int_S {e^{p[\epsilon \cos(\tau)] t}}\,dt\right)^{1/p}\,d\tau\right].
\end{multline*}

Define $M_1=\min\{s:s\in S\}\geq0$ and $M_2=\max\{s:s\in S\}\geq0$. Since $\cos(\theta)<0$ and $\cos(\tau)\leq1$, if $\tau\in[-\theta,\theta]$, I can estimate the right side of the above inequality by
$$\widetilde{M}\left[\int_{\epsilon}^\infty\left(\dfrac{1}{\tau}\right)\left(\int_{M_1}^{M_2} {e^{p[\tau \cos(\theta)] t}}\,dt\right)^{1/p}\,d\tau+\int_{-\theta}^\theta|S|^{1/p} {e^{\epsilon M_2}}\,d\tau\right],
$$
in order to obtain
\begin{multline*}\left[\int_S\|e^{At}|^p_{\mathcal{L}(X)}\,dt\right]^{1/p}\\\leq
\widetilde{M}\left[\left\{\int_{\epsilon}^\infty\dfrac{e^{[\tau \cos(\theta)] M_1}}{\big[p\tau^{p+1}(-\cos(\theta))\big]^{1/p}}\,d\tau\right\}+2\theta|S|^{1/p} {e^{\epsilon M_2}}\right]<\infty,
\end{multline*}
i.e., $e^{A t}\in L^{p}(S;\mathcal{L}(X))$.

Now, bearing in mind Theorem \ref{2.7} and following the exact same steps done above, I can deduce that
\begin{multline*}\left[\int_S\|E_{\alpha}(At^\alpha)|^p_{\mathcal{L}(X)}\,dt\right]^{1/p}\\\leq
\widetilde{M}\left[\left\{\int_{\epsilon}^\infty\dfrac{e^{[\tau \cos(\theta)] M_1}}{\big[p\tau^{p+1}(-\cos(\theta))\big]^{1/p}}\,d\tau\right\}+2\theta|S|^{1/p} {e^{\epsilon M_2}}\right]<\infty,
\end{multline*}
and
\begin{multline*}\left[\int_S\|E_{\alpha,\alpha}(At^\alpha)|^p_{\mathcal{L}(X)}\,dt\right]^{1/p}\\\leq
\widetilde{M}\left[\left\{\int_{\epsilon}^\infty\dfrac{e^{[\tau \cos(\theta)] M_1}}{\big[p\tau^{\alpha p+1}(-\cos(\theta))\big]^{1/p}}\,d\tau\right\}+2\theta|S|^{1/p} {e^{\epsilon M_2}}\right]<\infty.
\end{multline*}
i.e., $E_{\alpha}(A t^\alpha),E_{\alpha,\alpha}(A t^\alpha)\in L^{p}(S;\mathcal{L}(X))$.

Now, let me prove the convergence. Like it was done in the proof of Theorem \ref{mittag1limit}, I deduce the existence of $\widetilde{M}>0$ such that
\begin{multline*}\|E_\alpha(-A t^\alpha)x-e^{At}x\|\leq\widetilde{M}\|x\|\left[\int_{\epsilon}^\infty \dfrac{e^{[\tau \cos(\theta)] t}\Big|\big(\tau e^{i\theta}\big)-\big(\tau e^{i\theta}\big)^{\alpha}\Big|}{\tau^2}\,d\tau\right.\\
\left.+\int_{-\theta}^\theta e^{M_2}\Big|\big(\epsilon e^{i\tau }\big)^{\alpha-1}-1\Big|\,d\tau+\int_{\epsilon}^\infty \dfrac{e^{[\tau \cos(\theta)] t}\Big|\big(\tau e^{-i\theta}\big)-\big(\tau e^{-i\theta}\big)^{\alpha}\Big|}{\tau^2}\,d\tau\right],\end{multline*}
for every $t\in S$.

At first, observe that the triangular inequality of $L^p(S;X)$ ensures that
$$\left(\int_S\|E_\alpha(-A t^\alpha)-e^{At}\|^p_{\mathcal{L}(X)}\,dt\right)^{1/p}\\\leq \widetilde{M}\Big[\mathcal{I}_\alpha(t)+\mathcal{J}_\alpha+\mathcal{K}_\alpha(t)\Big],$$
where
\begin{multline*}\mathcal{I}_\alpha(t)=\left(\int_S\left[\int_{\epsilon}^\infty \dfrac{e^{[\tau \cos(\theta)] t}\Big|\big(\tau e^{i\theta}\big)-\big(\tau e^{i\theta}\big)^{\alpha}\Big|}{\tau^2}\,d\tau\right]^p dt\right)^{1/p},\\\mathcal{J}_\alpha=|S|^{1/p}e^{M_2}\left[\int_{-\theta}^\theta \Big|\big(\epsilon e^{i\tau }\big)^{\alpha-1}-1\Big|\,d\tau \right],\\
\textrm{ and }\quad
\mathcal{K}_\alpha(t)=\left(\int_S\left[\int_{\epsilon}^\infty \dfrac{e^{[\tau \cos(\theta)] t}\Big|\big(\tau e^{-i\theta}\big)-\big(\tau e^{-i\theta}\big)^{\alpha}\Big|}{\tau^2}\,d\tau\right]^p dt\right)^{1/p}.\end{multline*}

Like above, by applying Minkowski's integral inequality (see Theorem 202 in \cite{HaLiPo1}) I deduce that
$$\mathcal{I}_\alpha(t)\leq \int_{\epsilon}^\infty\left[\left(\int_{M_1}^{M_2} e^{[p\tau \cos(\theta)] t}dt\right)^{1/p}\dfrac{\Big|\big(\tau e^{i\theta}\big)-\big(\tau e^{i\theta}\big)^{\alpha}\Big|}{\tau^2}\right]\,d\tau,$$
what implies in the estimate
$$
\mathcal{I}_\alpha(t)\leq \int_{\epsilon}^\infty\dfrac{e^{[\tau \cos(\theta)] M_1}\Big|\big(\tau e^{i\theta}\big)-\big(\tau e^{i\theta}\big)^{\alpha}\Big|}{\big[p\tau^{2p+1}\big(-\cos(\theta)\big)\big]^{1/p}}\,d\tau=:\mathcal{I}_\alpha.$$
Similarly, I obtain
$$\mathcal{K}_\alpha(t)\leq \int_{\epsilon}^\infty\dfrac{e^{[\tau \cos(\theta)] M_1}\Big|\big(\tau e^{-i\theta}\big)^{\alpha}-\big(\tau e^{-i\theta}\big)\Big|}{\big[p\tau^{2p+1}\big(-\cos(\theta)\big)\big]^{1/p}}\,d\tau=:\mathcal{K}_\alpha.$$

Again, as a consequence of the dominated convergence theorem I deduce that
 $$\lim_{\alpha\rightarrow 1^-}\mathcal{I}_\alpha=0,\quad\lim_{\alpha\rightarrow 1^-}\mathcal{J}_\alpha=0\quad\textrm{and}\quad\lim_{\alpha\rightarrow 1^-}\mathcal{K}_\alpha=0,$$
therefore
$$\lim_{\alpha\rightarrow1^-}\left(\int_S\|E_\alpha(-A t^\alpha)-e^{At}\|^p_{\mathcal{L}(X)}\,dt\right)^{1/p}=0.$$

By analogous arguments I also conclude that
$$\lim_{\alpha\rightarrow1^-}\left(\int_S\|E_{\alpha,\alpha}(-A t^\alpha)-e^{At}\|^p_{\mathcal{L}(X)}\,dt\right)^{1/p}=0.$$
\end{proof}

\begin{remark} Observe that for $S\subset (0,\infty)$ the convergence presented in the above result becomes a consequence of Theorems \ref{mittag1limit} and \ref{mittag2limit}. However, since I am considering $S\subset\mathbb{R}^+$, the above result is in fact more general.
\end{remark}

\subsection{Questions Q3) and Q4)}
In this subsection I begin to deal with the nonlinearity $f(t,x)$, introducing new computations to the results obtained so far. Recall that if $f(t,x)$ is a continuous function, locally Lipschitz in the second variable, uniformly with respect to the first variable, and bounded (i.e. it maps bounded sets onto bounded sets), Theorem \ref{bualt} and Remark \ref{bualt2} ensure that for any $\alpha\in(0,1]$, there exists $w_\alpha>0$ (which can be $\infty$) and a unique maximal local mild solution (or global mild solution) $\phi_\alpha:[0,\omega_\alpha)\rightarrow X$ of the Cauchy problem \eqref{3.1}.

However there is no simple way to know if
\begin{equation}\label{3.61045}\stackrel[{\alpha\in(0,1]}]{}{\bigcap}[0,\omega_\alpha)=\{0\},\end{equation}
which, if this is the case, would lead me to face the possibility of calculate the fractional limit of the family of functions $\{\phi_\alpha(t):\alpha\in(0,1]\}$, when $\alpha\rightarrow1^-$, just to the constant value $\phi_\alpha(0)=u_0$. This is exactly what question Q3) intends to discuss.

Let me begin by presenting the following result.

\begin{theorem}\label{3.61045} Assume that $f:[0,\infty)\times X\rightarrow X$ is a continuous function, locally Lipschitz in the second variable, uniformly with respect to the first variable, and bounded (i.e. it maps bounded sets onto bounded sets), and consider problem \eqref{3.1}, for $\alpha\in(0,1]$. If $\phi_\alpha(t)$ is the maximal local mild solution of \eqref{3.1} defined over the interval $[0,\omega_{\alpha})$  (which can be $(0,\infty)$ for some values of $\alpha$), then for each $\alpha_0\in(0,1)$, there exists $t_{\alpha_0}>0$ such that
$$[0,t_{\alpha_0}]\subset\stackrel[{\alpha\in[\alpha_0,1]}]{}{\bigcap}[0,\omega_{\alpha}).$$
\end{theorem}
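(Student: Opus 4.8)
The plan is to set up a single Banach fixed-point argument whose contraction radius in time can be chosen \emph{independently} of $\alpha$ across the compact parameter interval $[\alpha_0,1]$, and then to convert the resulting common existence interval into a statement about the maximal intervals $[0,\omega_\alpha)$. First I would fix $u_0$ and let $M:=\max\{M_1,M_2\}$, where $M_1$ comes from Theorem~\ref{1.103} and $M_2$ is the uniform-in-$\alpha$ bound from Theorem~\ref{2.7}(iii), so that $\|E_\alpha(As^\alpha)\|_{\mathcal{L}(X)}$, $\|E_{\alpha,\alpha}(As^\alpha)\|_{\mathcal{L}(X)}$ and $\|e^{As}\|_{\mathcal{L}(X)}$ are all bounded by $M$ for every $\alpha\in[\alpha_0,1]$. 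Setting $\rho:=M\|u_0\|+1$, I would use the hypotheses on $f$ (bounded on bounded sets, locally Lipschitz in the second variable uniformly in the first) to fix a constant $K$ bounding $\|f(s,x)\|$ and a Lipschitz constant $L$ for $f(s,\cdot)$, both valid on the cylinder $[0,1]\times\overline{B}(0,\rho)$. For each $T\in(0,1]$ and each $\alpha\in[\alpha_0,1]$, I consider on the complete metric space $\mathcal{M}_T:=\{\phi\in C([0,T];X):\sup_{t}\|\phi(t)\|\le\rho\}$ the map $\Phi_\alpha$ defined by the right-hand side of the mild-solution formula (with the convention that for $\alpha=1$ it reduces to the semigroup formula).

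The heart of the matter is a single estimate uniform in $\alpha$. Using the operator bound $M$, for $\phi\in\mathcal{M}_T$ the integral term is controlled by $MK\int_0^t(t-s)^{\alpha-1}\,ds=MK\,t^\alpha/\alpha$, and the key elementary observation is that for $0<t\le T\le1$ and $\alpha\in[\alpha_0,1]$ one has $t^\alpha/\alpha\le T^{\alpha_0}/\alpha_0$, because $t\le1$ forces $t^\alpha\le t^{\alpha_0}\le T^{\alpha_0}$ while $1/\alpha\le1/\alpha_0$. This one bound, which tends to $0$ as $T\to0^+$, simultaneously controls the self-map inequality $\|\Phi_\alpha(\phi)\|_\infty\le M\|u_0\|+MK\,T^{\alpha_0}/\alpha_0$ and the contraction inequality $\|\Phi_\alpha(\phi)-\Phi_\alpha(\psi)\|_\infty\le ML\,T^{\alpha_0}/\alpha_0\,\|\phi-\psi\|_\infty$, for every $\alpha\in[\alpha_0,1]$ at once. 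I would therefore fix $t_{\alpha_0}:=T$ small enough that $MK\,T^{\alpha_0}/\alpha_0\le1$ and $ML\,T^{\alpha_0}/\alpha_0<1$; the Banach fixed-point theorem then produces, for each $\alpha\in[\alpha_0,1]$, a (unique) mild solution of \eqref{3.1} on the closed interval $[0,t_{\alpha_0}]$. The main obstacle is precisely securing the $\alpha$-independence of $t_{\alpha_0}$, and it is overcome by the combination of the uniform operator bound $M_2$ from Theorem~\ref{2.7}(iii) with the inequality $t^\alpha/\alpha\le T^{\alpha_0}/\alpha_0$; without the uniformity of $M_2$, or without restricting to $\alpha\ge\alpha_0>0$ so that $1/\alpha$ stays bounded, the time would be forced to $0$.

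Finally I would upgrade this common existence interval to the claimed inclusion. Because $f$ is locally Lipschitz, mild solutions are unique, so the solution just constructed on $[0,t_{\alpha_0}]$ coincides with the maximal solution $\phi_\alpha$ on $[0,\min\{t_{\alpha_0},\omega_\alpha\})$. If it were the case that $\omega_\alpha\le t_{\alpha_0}$, then restricting the constructed solution to $[0,\omega_\alpha]$ would yield a mild solution on the closed interval $[0,\omega_\alpha]$ agreeing with $\phi_\alpha$ on $[0,\omega_\alpha)$, i.e.\ a continuation of $\phi_\alpha$ over $[0,\omega_\alpha]$, contradicting the maximality of $\phi_\alpha$ in the sense of the definition preceding Theorem~\ref{bualt}. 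Hence $t_{\alpha_0}<\omega_\alpha$ for every $\alpha\in[\alpha_0,1]$, which gives $[0,t_{\alpha_0}]\subset\bigcap_{\alpha\in[\alpha_0,1]}[0,\omega_\alpha)$ and completes the argument.
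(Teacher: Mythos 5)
Your strategy (a single Banach fixed-point argument with a contraction time chosen uniformly over $\alpha\in[\alpha_0,1]$, using the $\alpha$-uniform bound $M_2$ of Theorem \ref{2.7}(iii) together with the elementary inequality $t^\alpha/\alpha\le T^{\alpha_0}/\alpha_0$ for $0<t\le T\le1$) is genuinely different from the paper's proof, which argues by contradiction: it assumes the intersection is $\{0\}$, extracts a sequence $\alpha_n\to\alpha$ with exit times $\sigma_n\to0$ at which $\|\phi_{\alpha_n}(\sigma_n)-u_0\|=R_0/2$, and then combines the uniform continuity of the Mittag-Leffler family at $t=0$ (Theorem \ref{mittag1.5limit}) with the singular Gronwall inequality of \cite[Lemma 7.1.1]{He1} to force $R_0/2\le R_0/4$. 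Your final step (uniqueness plus the continuation/maximality definitions to convert a common existence interval into $t_{\alpha_0}<\omega_\alpha$) is correct and is the right way to close the argument.

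There is, however, one genuine gap: the claim that you can ``fix a Lipschitz constant $L$ for $f(s,\cdot)$ valid on the cylinder $[0,1]\times\overline{B}(0,\rho)$'' with $\rho=M\|u_0\|+1$. The hypothesis, as the paper uses it, only provides a single ball $B\big((0,u_0),R_0\big)$ (with $R_0$ possibly very small) on which $f$ is Lipschitz in the second variable; in an infinite-dimensional Banach space, a locally Lipschitz function need not be Lipschitz on a closed bounded ball, since such balls are not compact, so $L$ on $[0,1]\times\overline{B}(0,\rho)$ simply may not exist. The natural repair is to run the contraction on $\{\phi\in C([0,T];X):\|\phi(t)-u_0\|\le R_0/2\}$, but then the self-map condition requires $\sup_{\alpha\in[\alpha_0,1]}\|E_\alpha(At^\alpha)u_0-u_0\|$ to be small for $t\le T$, i.e.\ continuity of $(\alpha,t)\mapsto E_\alpha(At^\alpha)u_0$ at $t=0$ \emph{uniformly in} $\alpha$. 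For each fixed $\alpha$ this is Theorem \ref{2.7}(ii), but the uniformity over $[\alpha_0,1]$ is exactly the nontrivial analytic ingredient that the paper supplies through Theorem \ref{mittag1.5limit}; your proof silently bypasses it by strengthening the hypothesis on $f$ to ``Lipschitz on bounded sets''. Under that stronger hypothesis your argument is complete (and arguably cleaner), but under the stated hypothesis you must import the uniform-in-$\alpha$ strong continuity at $t=0$ to close the self-map estimate.
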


\begin{proof} Observe that for each $\alpha\in(0,1]$, Theorem \ref{bualt} and Remark \ref{bualt2} ensure the existence and uniqueness of a maximal local mild solution (or global mild solution) of \eqref{3.1}, such that the maximal interval of existence for the solution $\phi_\alpha(t)$ is $[0,\infty)$ or a finite interval $[0,\omega_\alpha)$ and, in such case,
\begin{equation}\label{blownow1}\limsup_{t\rightarrow\omega_\alpha^{-}}\|\phi_\alpha(t)\|=\infty.\end{equation}

Since $f(t,x)$ is locally Lipschitz in the second variable, there exist $R_0,L_0>0$ such that
$$\|f(t,x)-f(t,y)\|\leq L_0\|x-y\|$$
for every $(t,x),(t,y)\in B\big((0,u_0),R_0\big)$.

Let $\alpha_0\in(0,1)$ and suppose that
$$\stackrel[{\alpha\in[\alpha_0,1]}]{}{\bigcap}[0,\omega_{\alpha})=\{0\}.$$
Then, there should exist a sequence $\{\alpha_n\}_{n=1}^{\infty}\subset[\alpha_0,1]$ and a related decreasing sequence of positive real numbers $\{\omega_{\alpha_n}\}_{n=1}^{\infty}$, such that $\lim_{n\rightarrow\infty}{\omega_{\alpha_n}}=0$. By choosing a subsequence of $\{\omega_{\alpha_n}\}_{n=1}^{\infty}$, if necessary, I can suppose that there exist $\alpha\in[\alpha_0,1]$ such that $\alpha_n$ converges to $\alpha$ and a sequence $\{\sigma_n\}_{n=1}^\infty$ with the following properties:
\begin{itemize}
\item[(i)] $0<\omega_{\alpha_{n+1}}<\sigma_n<\omega_{\alpha_n}$,\, for every $n\in\mathbb{N}$;\vspace*{0.2cm}
\item[(ii)] $\|\phi_{\alpha_n}(t)-u_0\|<R_0/2$, for $t\in[0,\sigma_n)$, and $\|\phi_{\alpha_n}(\sigma_n)-u_0\|=R_0/2$, \, for every $n\in\mathbb{N}$. This assertion follows mainly from \eqref{blownow1}.
\end{itemize}

Now observe that
\begin{multline*}\|E_{{\alpha_n}}(A(\sigma_n)^{{\alpha_n}})u_0-u_0\|\leq\|E_{{\alpha_n}}(A(\sigma_n)^{{\alpha_n}})u_0-E_{{\alpha}}(A(\sigma_n)^{{\alpha}})u_0\|
\\+\|E_{{\alpha}}(A(\sigma_n)^{{\alpha}})u_0-u_0\|,\end{multline*}
therefore, Theorems \ref{2.7} and \ref{mittag1.5limit} ensure that
$$\lim_{n\rightarrow\infty}\|E_{{\alpha_n}}(A(\sigma_n)^{{\alpha_n}})u_0-u_0\|=0.$$

Thus, choose $N_0\in \mathbb{N}$ such that
$$\sigma_n<R_0/2\qquad\textrm{and}\qquad \|E_{{\alpha_n}}(A(\sigma_n)^{{\alpha_n}})u_0-u_0\|\leq R_0/4,$$
for any $n\geq N_0$. Then, if $n\geq N_0$ I have
\begin{multline*}\|\phi_{\alpha_n}(\sigma_n)-u_0\|\leq\|E_{{\alpha_n}}(A(\sigma_n)^{{\alpha_n}})u_0-u_0\|\\
+\displaystyle\left\|\int_{0}^{\sigma_n}(\sigma_n-s)^{{\alpha_n}-1}E_{{\alpha_n},{\alpha_n}}(A(\sigma_n-s)^{{\alpha_n}})f(s,\phi_{\alpha_n}(s))ds\right\|,\end{multline*}
and therefore
\begin{multline*}\|\phi_{\alpha_n}(\sigma_n)-u_0\|\leq (R_0/4)\\
+M_2\displaystyle\int_{0}^{\sigma_n}(\sigma_n-s)^{{\alpha_n}-1}\Big[\big\|f(s,\phi_{\alpha_n}(s))-f(s,u_0)\big\|+\big\|f(s,u_0)\big\|\Big]\,ds,\end{multline*}
where $M_2>0$ is given uniformly on $\alpha\in[\alpha_0,1]$ by Theorem \ref{2.7}.

Now, since $(\sigma_n,\phi_{\alpha_n}(\sigma_n)),(\sigma_n,u_0)\in B\big((0,u_0),R_0\big)$, I have that
$$\|\phi_{\alpha_n}(\sigma_n)-u_0\|\leq (R_0/4) +M_2M_3\sigma_n
+M_2L_0\displaystyle\int_{0}^{\sigma_n}(\sigma_n-s)^{{\alpha_n}-1}\|\phi_{\alpha_n}(s)-u_0\|\,ds,$$
where $M_3={\max_{s\in[0,\omega_{\alpha_1}]}{\|f(s,0)\|}}.$

Finally, from \cite[Lemma 7.1.1]{He1} I have that
$$\|\phi_{\alpha_n}(\sigma_n)-u_0\|\leq \theta_n+\eta_n\displaystyle\int_{0}^{\sigma_n}(\sigma_n-s)^{{\alpha_n}-1}E_{\alpha_n,\alpha_n}\left(\eta_n(\sigma_n-s)\right)\theta_n\,ds,$$
where $\eta_n=\big[M_2L_0\Gamma(\alpha_n)\big]^{1/\alpha_n}$ and $\theta_n=(R_0/4) +M_2M_3\sigma_n$. Thus, using the dominated convergence theorem and Theorems \ref{2.7}, I deduce that
\begin{multline*}R_0/2=\lim_{n\rightarrow\infty}{\|\phi_{\alpha_n}(\sigma_n)-u_0\|}\\
\leq\lim_{n\rightarrow\infty}\theta_n
+\left(\lim_{n\rightarrow\infty}\eta_n\right)\left[\lim_{n\rightarrow\infty}\displaystyle\int_{0}^{\sigma_n}(\sigma_n-s)^{{\alpha_n}-1}E_{\alpha_n,\alpha_n}\left(\eta_n(\sigma_n-s)\right)\theta_n\,ds\right]
\\=(R_0/4),\end{multline*}
what is a contradiction. This concludes the proof of this theorem.

\end{proof}

In order to give another characterization to the biggest interval contained in the set $\cap_{\alpha\in[\alpha_0,1]}[0,\omega_{\alpha})$, for each $\alpha_0\in(0,1)$, I present the following result.

\begin{theorem}\label{3.610451} Consider same hypotheses of Theorem \ref{3.61045} and assume that for any $\alpha_0$ in $(0,1)$, there exists $\alpha\in[\alpha_0,1]$ such that $\omega_{\alpha}<\infty$. Define
$$\Omega_{\alpha_0}:=\sup\{t>0:[0,t]\subset\cap_{\alpha\in[\alpha_0,1]}[0,\omega_{\alpha})\}.$$
Then, I have that:
\begin{itemize}
\item[(i)] for each $\alpha_0\in(0,1)$ it holds that $\inf\{\omega_\alpha:\alpha\in[\alpha_0,1]\textrm{ with }\omega_\alpha<\infty\}=\Omega_{\alpha_0}$;\vspace*{0.2cm}
\item[(ii)] for each $\alpha_0\in(0,1)$ I have that $[0,\Omega_{\alpha_0})\subset[0,\omega_\alpha)$, for every $\alpha\in[\alpha_0,1]$;\vspace*{0.2cm}
\item[(iii)] if $\alpha_1,\alpha_2\in(0,1]$, with $\alpha_1<\alpha_2$, then $\Omega_{\alpha_1}\leq\Omega_{\alpha_2}$.
\end{itemize}
\end{theorem}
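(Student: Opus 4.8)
The plan is to reduce all three parts to the order structure of the intersection $\cap_{\alpha\in[\alpha_0,1]}[0,\omega_\alpha)$, relying on Theorem \ref{3.61045} only to guarantee that $\Omega_{\alpha_0}$ is a well-defined \emph{positive} number. First I would record that, by Theorem \ref{3.61045}, for each $\alpha_0\in(0,1)$ there is $t_{\alpha_0}>0$ with $[0,t_{\alpha_0}]\subset\cap_{\alpha\in[\alpha_0,1]}[0,\omega_\alpha)$; hence the set whose supremum defines $\Omega_{\alpha_0}$ is nonempty and $\Omega_{\alpha_0}\geq t_{\alpha_0}>0$. The extra hypothesis (existence of some $\alpha\in[\alpha_0,1]$ with $\omega_\alpha<\infty$) guarantees that $I_{\alpha_0}:=\inf\{\omega_\alpha:\alpha\in[\alpha_0,1]\textrm{ with }\omega_\alpha<\infty\}$ is a genuine real number, and since $t_{\alpha_0}<\omega_\alpha$ for every $\alpha$ I also get $I_{\alpha_0}\geq t_{\alpha_0}>0$.

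For part (i) I would prove $\Omega_{\alpha_0}=I_{\alpha_0}$ by two inequalities, thereby avoiding any case distinction on whether the infimum is attained. For $\Omega_{\alpha_0}\leq I_{\alpha_0}$: whenever $[0,t]\subset\cap_{\alpha}[0,\omega_\alpha)$ one has in particular $t<\omega_\alpha$ for every $\alpha$ with $\omega_\alpha<\infty$; taking the supremum over such $t$ gives $\Omega_{\alpha_0}\leq\omega_\alpha$ for each finite $\omega_\alpha$, and then the infimum over these $\omega_\alpha$ yields $\Omega_{\alpha_0}\leq I_{\alpha_0}$. For $\Omega_{\alpha_0}\geq I_{\alpha_0}$: if $0<t<I_{\alpha_0}$ then every $s\in[0,t]$ satisfies $s\leq t<I_{\alpha_0}\leq\omega_\alpha$ for all $\alpha$ (the finite $\omega_\alpha$ are bounded below by $I_{\alpha_0}$, the infinite ones are trivial), so $[0,t]\subset\cap_{\alpha}[0,\omega_\alpha)$ and thus $t\leq\Omega_{\alpha_0}$; letting $t\uparrow I_{\alpha_0}$ gives $\Omega_{\alpha_0}\geq I_{\alpha_0}$.

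Part (ii) would then be immediate from (i): for any fixed $\alpha\in[\alpha_0,1]$ one has $\Omega_{\alpha_0}=I_{\alpha_0}\leq\omega_\alpha$, so every $t<\Omega_{\alpha_0}$ satisfies $t<\omega_\alpha$, which is exactly $[0,\Omega_{\alpha_0})\subset[0,\omega_\alpha)$. For part (iii), with $\alpha_1<\alpha_2$ I would exploit the inclusion of index sets $[\alpha_2,1]\subset[\alpha_1,1]$, which forces $\cap_{\alpha\in[\alpha_1,1]}[0,\omega_\alpha)\subset\cap_{\alpha\in[\alpha_2,1]}[0,\omega_\alpha)$; consequently any $t$ for which $[0,t]$ lies in the first intersection also has $[0,t]$ in the second, and taking suprema gives $\Omega_{\alpha_1}\leq\Omega_{\alpha_2}$.

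Since all three parts are soft order-theoretic manipulations, I expect no serious analytic obstacle here; the only point demanding care is the bookkeeping around whether the defining infimum is attained, which the two-inequality argument for (i) sidesteps entirely. The genuine work has already been carried out in Theorem \ref{3.61045}, whose positivity conclusion is precisely what makes $\Omega_{\alpha_0}$ and $I_{\alpha_0}$ strictly positive and the suprema and infima meaningful.
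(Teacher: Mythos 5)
Your proof is correct and follows essentially the same order-theoretic route as the paper: the inequality $\Omega_{\alpha_0}\leq I_{\alpha_0}$ comes straight from the definition, and your direct limiting argument $t\uparrow I_{\alpha_0}$ for the reverse inequality is just the paper's midpoint-contradiction argument recast in direct form. Your treatments of (ii) and (iii) correctly spell out what the paper merely asserts as consequences of the definition of $\Omega_{\alpha_0}$ and of the monotonicity of intersections over shrinking index sets.
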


\begin{proof} $(i)$ Since by definition it holds that
$$\Omega_{\alpha_0}\leq \inf\{\omega_\alpha:\alpha\in[\alpha_0,1]\textrm{ with }\omega_\alpha<\infty\},$$
let me assume for an instant that $\Omega_{\alpha_0}< \inf\{\omega_\alpha:\alpha\in[\alpha_0,1]\textrm{ with }\omega_\alpha<\infty\}$. If such is the case, by setting
$$r:=\left[\Omega_{\alpha_0}+ \inf\{\omega_\alpha:\alpha\in[\alpha_0,1]\textrm{ with }\omega_\alpha<\infty\}\right]/2,$$
I could verify that $\Omega_{\alpha_0}+r\in \{t>0:[0,t]\subset\cap_{\alpha\in[\alpha_0,1]}[0,\omega_{\alpha})\}$, what would lead me to the contradiction $\Omega_{\alpha_0}+r\leq\Omega_{\alpha_0}$. Therefore
$$\Omega_{\alpha_0}= \inf\{\omega_\alpha:\alpha\in[\alpha_0,1]\textrm{ with }\omega_\alpha<\infty\},$$
as I wanted.

Finally item $(ii)$ follows from the definition of $\Omega_{\alpha_0}$, while item $(iii)$ is a directly consequence of the infimum properties.
\end{proof}

\begin{remark}\label{3.6104512} It worths to emphasize the importance of the hypotheses: ``for any $\alpha_0$ in $(0,1)$, there exists $\alpha\in[\alpha_0,1]$ such that $\omega_{\alpha}<\infty$''. In fact, if this is not the case, there should exists $\tilde{\alpha}\in(0,1)$ such that for every $\alpha\in[\tilde{\alpha},1]$ I would have
$$[0,\omega_{\alpha})=[0,\infty).$$
\end{remark}

Now, let me change the subject a little and begin to address question Q4).

\begin{theorem}\label{finalalphalimit} Consider the Cauchy problem \eqref{3.1} with $\alpha\in(0,1]$ and assume that function $f:[0,\infty)\times X\rightarrow X$ is continuous, locally Lipschitz in the second variable, uniformly with respect to the first variable, and bounded (i.e. it maps bounded sets onto bounded sets). Then, if $\phi_\alpha(t)$ is the maximum local mild solution (or global mild solution) of \eqref{3.1} defined over $[0,\omega_{\alpha})$ (or $[0,\infty)$), there exists $t_*>0$ such that
$$\lim_{\alpha\rightarrow 1^-}{\|\phi_\alpha(t)-\phi_{1}(t)\|}=0,$$
for every $t\in[0,t^*]$.
\end{theorem}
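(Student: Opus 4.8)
The plan is to prove the convergence on a common interval of existence by combining the operator-limit theorems from the previous subsection with a Grönwall-type estimate. First I would fix $\alpha_0 \in (0,1)$ and invoke Theorem~\ref{3.61045} to obtain $t_{\alpha_0}>0$ with $[0,t_{\alpha_0}]\subset\bigcap_{\alpha\in[\alpha_0,1]}[0,\omega_\alpha)$; setting $t^*=t_{\alpha_0}$ guarantees that $\phi_\alpha(t)$ and $\phi_1(t)$ are both defined and continuous on $[0,t^*]$ for every $\alpha\in[\alpha_0,1]$. This resolves the fundamental issue that without a common domain the statement would be vacuous. On this fixed interval, the boundedness of $f$ together with the uniform bound $M_2$ from Theorem~\ref{2.7} yields a uniform bound $\sup_{\alpha\in[\alpha_0,1]}\sup_{t\in[0,t^*]}\|\phi_\alpha(t)\|\leq K$ for some $K>0$, so all values $\phi_\alpha(t)$ live in a fixed bounded set on which $f$ is Lipschitz with a single constant $L$.

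\textbf{The core estimate.} Next I would write, for $\alpha\in(0,1)$ and $t\in[0,t^*]$, the difference $\phi_\alpha(t)-\phi_1(t)$ using the two mild-solution representations and split it into three groups. The first group collects the \emph{linear-part discrepancies}: $\|E_\alpha(At^\alpha)u_0 - e^{At}u_0\|$ and the difference of the kernels $(t-s)^{\alpha-1}E_{\alpha,\alpha}(A(t-s)^\alpha)$ versus $e^{A(t-s)}$ acting on $f(s,\phi_1(s))$. The second group is the \emph{nonlinear-consistency term}, where I add and subtract $f(s,\phi_1(s))$ inside the integral to produce $\|f(s,\phi_\alpha(s))-f(s,\phi_1(s))\|\leq L\|\phi_\alpha(s)-\phi_1(s)\|$. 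Denoting $g_\alpha(t):=\|\phi_\alpha(t)-\phi_1(t)\|$ and lumping the first group into a quantity $\varepsilon(\alpha,t)$, I arrive at a singular integral inequality of the form
\begin{equation*}
g_\alpha(t)\leq \varepsilon(\alpha,t) + M_2 L \int_0^t (t-s)^{\alpha-1} g_\alpha(s)\,ds.
\end{equation*}
Applying the singular Grönwall inequality \cite[Lemma 7.1.1]{He1}, exactly as in the proof of Theorem~\ref{3.61045}, I bound $g_\alpha(t)$ by $\varepsilon(\alpha,t)$ times a Mittag-Leffler factor that stays uniformly bounded for $\alpha\in[\alpha_0,1]$ and $t\in[0,t^*]$.

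\textbf{Controlling the source term.} It then remains to show $\sup_{t\in[0,t^*]}\varepsilon(\alpha,t)\to 0$ as $\alpha\to 1^-$. The term $\|E_\alpha(At^\alpha)u_0-e^{At}u_0\|$ is handled by Theorem~\ref{mittag1limit}, uniformly on the compact set $[0,t^*]$ (with the $t=0$ endpoint contributing nothing). The kernel-difference term is the delicate piece: I would split $(t-s)^{\alpha-1}E_{\alpha,\alpha}(A(t-s)^\alpha)-e^{A(t-s)}$ into $\big[(t-s)^{\alpha-1}-1\big]E_{\alpha,\alpha}(A(t-s)^\alpha)$ plus $\big[E_{\alpha,\alpha}(A(t-s)^\alpha)-e^{A(t-s)}\big]$, bound the operator norms uniformly by $M_2$ and $M_1$ and the boundedness of $f$, and integrate against $\|f(s,\phi_1(s))\|\leq M$. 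The second summand vanishes by Theorem~\ref{mittag2limit}, again uniformly on compacts of $(0,\infty)$. The factor $(t-s)^{\alpha-1}-1$ is integrable and tends to $0$ pointwise, so the dominated convergence theorem, together with the integrable singularity $(t-s)^{\alpha-1}\leq (t-s)^{\alpha_0-1}$ providing a uniform majorant, forces this contribution to $0$ as well.

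\textbf{The main obstacle} I anticipate is the near-zero behaviour of the convolution kernel: the uniformity in Theorems~\ref{mittag1limit} and~\ref{mittag2limit} is only guaranteed on compact subsets of $(0,\infty)$, whereas here the kernel is evaluated at $t-s$ which ranges down to $0$ as $s\to t$. The resolution is that the weak singularity $(t-s)^{\alpha-1}$ is uniformly integrable for $\alpha\in[\alpha_0,1]$ and the operator norms of both $E_{\alpha,\alpha}(A(t-s)^\alpha)$ and $e^{A(t-s)}$ stay bounded by a constant right down to $s=t$; thus the small-$(t-s)$ slab contributes an amount controlled by $\int_0^\delta (t-s)^{\alpha_0-1}\,ds$, which can be made uniformly small independently of $\alpha$, after which Theorem~\ref{mittag2limit} handles the remaining region $s\in[0,t-\delta]$ where $t-s$ lies in a compact set. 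Splitting the integral at this scale $\delta$ and sending $\alpha\to 1^-$ before $\delta\to 0^+$ closes the argument.
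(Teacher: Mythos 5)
Your overall strategy coincides with the paper's: restrict to a common existence interval via Theorem \ref{3.61045}, decompose $\phi_\alpha-\phi_1$ into the linear-part discrepancies plus the nonlinear consistency term, apply the singular Gronwall inequality of \cite[Lemma 7.1.1]{He1}, and drive the source term to zero with Theorems \ref{mittag1limit} and \ref{mittag2limit}. Your $\delta$-splitting of the convolution near $s=t$ is a workable substitute for the paper's use of the $L^p$-convergence result (Theorem \ref{intemit}) together with dominated convergence; both handle the same difficulty.

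The one step that does not hold as written is the claim that ``the boundedness of $f$ together with the uniform bound $M_2$ yields a uniform bound $\sup_{\alpha}\sup_{t}\|\phi_\alpha(t)\|\leq K$.'' Boundedness of $f$ on bounded sets cannot be applied to $f(s,\phi_\alpha(s))$ until you already know the trajectories lie in a fixed bounded set, so that deduction is circular; and even granting such a $K$, the hypothesis is only \emph{local} Lipschitz continuity, so a single Lipschitz constant is available only on the specific ball $B((0,u_0),R_0)$ furnished by the hypothesis, not on an arbitrary bounded set. What is actually needed --- and what the paper establishes as its consideration (ii) --- is that, after shrinking to some $t_*\leq t_0$, one has $(t,\phi_\alpha(t))\in B((0,u_0),R_0/2)$ for every $\alpha\in[1/2,1]$ and $t\in[0,t_*]$; this is proved by a contradiction/continuation argument (sequences $\alpha_n$ and exit times $\tau_n\to 0$, Theorem \ref{mittag1.5limit}, and the singular Gronwall lemma, exactly as in the proof of Theorem \ref{3.61045}). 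Once that uniform containment is supplied, the remainder of your argument goes through and matches the paper's proof.
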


\begin{proof} Observe that Theorem \ref{bualt} and Remark \ref{bualt2} guarantee the existence of a maximal mild solution (or global mild solution) $\phi_\alpha(r)$ defined over a maximal interval of existence $[0,\omega_{\alpha})$ (or $[0,\infty)$), such that for $\alpha\in(0,1]$ satisfies
\begin{equation*}\phi_\alpha(t)=\left\{\begin{array}{ll}E_{\alpha}(At^{\alpha})u_{0}+\displaystyle\int_{0}^{t}(t-s)^{\alpha-1}E_{\alpha,\alpha}(A(t-s)^{\alpha})f(s,\phi_\alpha(s))ds,&\\&\hspace*{-1.5cm}\textrm{if }\alpha\in(0,1),\vspace*{0.2cm}\\
    e^{At}u_{0}+\displaystyle\int_{0}^{t}e^{A(t-s)}f(s,\phi_\alpha(s))ds,&\hspace*{-1.5cm}\textrm{if }\alpha=1.\end{array}\right. \end{equation*}

Now I need to make two considerations.
\begin{itemize}
\item[(i)] Note that Theorem \ref{3.610451} and Remark \ref{3.6104512} ensure that
$$[0,\Omega_{1/2}/2]\subset \cap_{\alpha\in[1/2,1]}[0,\omega_{\alpha})\quad\textrm{or}\quad[0,\infty)\subset \cap_{\alpha\in[1/2,1]}[0,\omega_{\alpha}).$$
In any case, it is not difficult to choose $t_0>0$ such that $[0,t_0]\subset [0,\omega_{\alpha})$, for every $\alpha\in[1/2,1]$.\vspace*{0.2cm}
\item[(ii)] Recall that $f(r,x)$ is locally Lipschitz in the second variable, i.e., there exist $R_0,L_0>0$ such that
$$\|f(r,x)-f(r,y)\|\leq L_0\|x-y\|$$
for every $(r,x),(r,y)\in B\big((0,u_0),R_0\big)$. In that way, I can infer that there exists $t_*\in[0,t_0]$ such that
$$\cup_{\alpha\in[1/2,1]}\{(t,\phi_\alpha(t)):t\in[0,t_*]\}\subset B\big((0,u_0),R_0/2\big),$$
since otherwise, it would exist $\alpha_*\in[1/2,1]$ and two sequences of real numbers $\{\alpha_n\}_{n=1}^{\infty}\subset[1/2,1]$ and $\{\tau_n\}_{n=1}^\infty\subset [0,t_0]$, with $\alpha_n\rightarrow \alpha_*$ and $\tau_n\rightarrow 0$, when $n\rightarrow\infty$, that also satisfies $\|\phi_{\alpha_n}(t)-u_0\|<R_0/2$, for all $t\in[0,\tau_n)$, and $\|\phi_{\alpha_n}(\tau_n)-u_0\|=R_0/2$.

However this construction, like the one given in the proof of Theorem \ref{3.61045}, leads to a contradiction.\vspace*{0.2cm}
\item[(iii)] Define $N:=\sup\{\|f(s,\phi_1(s))\|:s\in[0,t_*]\}$, which is finite since $f(s,\phi_{1}(s))$ is continuous and $[0,t_*]$ is a compact interval.
\end{itemize}

Taking into account considerations $(i)$, $(ii)$ and $(iii)$, I deduce that
\begin{multline*}\|\phi_{\alpha}(t)-\phi_{1}(t)\|\leq\|E_{\alpha}(At^{\alpha})u_0-e^{At}u_0\|\\
+\left\|\displaystyle\int_{0}^{t}(t-s)^{\alpha-1}E_{\alpha,\alpha}(A(t-s)^{\alpha})\Big[f(s,\phi_\alpha(s))-f(s,\phi_1(s)\Big]ds\right\|\hspace*{3cm}\\
+\left\|\displaystyle\int_{0}^{t}(t-s)^{\alpha-1}\Big[E_{\alpha,\alpha}(A(t-s)^{\alpha})-e^{A(t-s)}\Big]f(s,\phi_1(s))ds\right\|\\
+\left\|\displaystyle\int_{0}^{t}\Big[(t-s)^{\alpha-1}-1\Big]e^{A(t-s)}f(s,\phi_1(s))ds\right\|,\end{multline*}
for every $\alpha\in[1/2,1]$ and $t\in[0,t_*]$, therefore, by considering also Theorems \ref{1.103} and \ref{2.7}, I obtain
\begin{multline*}\|\phi_{\alpha}(t)-\phi_{1}(t)\|\leq\|E_{\alpha}(At^{\alpha})-e^{At}\|_{\mathcal{L}(X)}\|u_0\|\\
+L_0M_2\displaystyle\int_{0}^{t}(t-s)^{\alpha-1}\|\phi_\alpha(s))-\phi_1(s)\|ds\\
\hspace*{6cm}+N\displaystyle\int_{0}^{t}s^{\alpha-1}\Big\|E_{\alpha,\alpha}(As^{\alpha})-e^{As}\Big\|_{\mathcal{L}(X)}ds\\
+NM_1\displaystyle\int_{0}^{t}\big[s^{\alpha-1}-1\big]ds.\end{multline*}
for every $\alpha\in[1/2,1]$  and $t\in[0,t_*]$. Note that I may rewrite the above inequality as
\begin{equation}\label{finalequi}\|\phi_{\alpha}(t)-\phi_{1}(t)\|\leq g_\alpha(t)+L_0M_2\displaystyle\int_{0}^{t}(t-s)^{\alpha-1}\|\phi_\alpha(s))-\phi_1(s)\|ds,\end{equation}
for every $\alpha\in[1/2,1]$  and $t\in[0,t_*]$, where
\begin{multline*}g_\alpha(t)=\|E_{\alpha}(At^{\alpha})-e^{At}\|_{\mathcal{L}(X)}\|u_0\|+N\displaystyle\int_{0}^{t}s^{\alpha-1}\Big\|E_{\alpha,\alpha}(As^{\alpha})-e^{As}\Big\|_{\mathcal{L}(X)}ds
\\+NM_1\displaystyle\int_{0}^{t}\big[s^{\alpha-1}-1\big]ds.\end{multline*}

Finally, by considering \cite[Lemma 7.1.1]{He1}, I have that \eqref{finalequi}, the dominated convergence theorem, Theorems \ref{mittag1limit}, \ref{mittag2limit} and \ref{intemit} allow us to deduce that
$$\lim_{\alpha\rightarrow1^-}\|\phi_{\alpha}(t)-\phi_{1}(t)\|=0,$$
for each $t\in[0,t_*]$. This completes the proof of the theorem.
 \end{proof}

\section{Closing Remarks}\label{sec4} All the computations done in this manuscript could have being more general. I could have considered $\alpha_0\in(0,1]$ and studied the limit
$$\lim_{\alpha\rightarrow\alpha_0}{u_\alpha(t)}=u_{\alpha_0}(t),$$
or, I could have considered the Cauchy problem with initial condition $u(t_0)=u_0$. However, these two more general problems would only bring technical difficulties, since the computations would be analogous. That is why I avoided dealing with these situations here.

As a final commentary, recall that I can consider ``well-behaved'' functions $f(t,x)$ in order to obtain better results.

\begin{proposition}\label{propfinal} If $f:[0,\infty)\times X\rightarrow X$ is globally Lipschitz on the second variable, uniformly with respect to the first variable, that is, there exist $L>0$ such that
$$\|f(t,x)-f(t,y)\|\leq L\|x-y\|$$
for every $x,y\in X$ and every $t\in[0,\infty)$, then $f$ maps bounded sets into bounded sets.
\end{proposition}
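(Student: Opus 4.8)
The plan is to separate the boundedness of $f$ on a bounded set into a Lipschitz-controlled part and a part depending only on the first variable, and then to control each separately. First I would fix an arbitrary bounded set $B\subset[0,\infty)\times X$. Since $B$ is bounded in the product space, its projections onto each factor are bounded, so there exist $T>0$ and $R>0$ with $B\subset[0,T]\times\{x\in X:\|x\|\leq R\}$. It therefore suffices to bound $\|f(t,x)\|$ uniformly over $(t,x)$ ranging in this product.

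Next, for any $(t,x)\in B$ I would insert the reference point $(t,0)$ and apply the triangle inequality together with the global Lipschitz hypothesis in the second variable:
$$\|f(t,x)\|\leq\|f(t,x)-f(t,0)\|+\|f(t,0)\|\leq L\|x\|+\|f(t,0)\|\leq LR+\|f(t,0)\|.$$
This reduces the whole problem to controlling the single residual term $\|f(t,0)\|$ for $t\in[0,T]$, which is precisely the quantity that the Lipschitz estimate cannot touch, since that hypothesis says nothing about how $f$ behaves as its first argument varies.

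The key step, and the only genuine obstacle, is to bound $\|f(t,0)\|$ uniformly in $t\in[0,T]$. Here I would invoke the standing assumption of the paper that $f$ is continuous on $[0,\infty)\times X$: the map $[0,T]\ni t\mapsto f(t,0)\in X$ is then continuous, and $t\mapsto\|f(t,0)\|$ is a continuous real-valued function on the compact interval $[0,T]$. Consequently it attains a finite maximum $C:=\max_{t\in[0,T]}\|f(t,0)\|<\infty$.

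Combining the two estimates yields $\|f(t,x)\|\leq LR+C$ for every $(t,x)\in B$, so $f(B)$ is bounded, which completes the argument. I would close by stressing that continuity in the first variable is what makes the statement true: the Lipschitz hypothesis alone governs only the $x$-direction, and it is the compactness of the $t$-projection of a bounded set combined with continuity that supplies the uniform bound on $\|f(t,0)\|$; without such a regularity assumption the term $\|f(t,0)\|$ could fail to be bounded even on the compact interval $[0,T]$.
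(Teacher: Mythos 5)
Your proof is correct and uses essentially the same decomposition as the paper: insert the reference point $(t,0)$, apply the triangle inequality, and invoke the global Lipschitz bound to get $\|f(t,x)\|\leq L\|x\|+\|f(t,0)\|$. In fact your version is slightly more complete than the paper's one-line argument, since you explicitly justify why the residual term $\|f(t,0)\|$ is bounded on the $t$-projection of a bounded set (continuity of $f$ plus compactness of $[0,T]$), a step the paper leaves implicit.
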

\begin{proof} Just observe that for any $t\geq0$ and $x\in X$ I have the inequality
$$\|f(t,x)\|\leq\|f(t,x)-f(t,0)\|+\|f(t,0)\|\leq L\|x\|+\|f(t,0)\|.$$
\end{proof}

Proposition \ref{propfinal} allows me to conclude that every solution of the Cauchy problem \eqref{3.1}, when $f(t,x)$ is globally Lipschitz on the second variable, uniformly with respect to the first variable, is defined in $[0,\infty)$ (this is a consequence of estimatives I have already done in this manuscript, Theorem \ref{bualt}, Remark \ref{bualt2} and \cite[Lemma 7.1.1]{He1}). Therefore, I would be able to compute the limit done in Theorem \ref{finalalphalimit} for every $t\in[0,\infty)$, in order to obtain a more general result.

The discussion about functions $f(t,x)$ that are locally Lipschitz on the second variable, uniformly with respect to the first variable, deserves a more profound study, which is being developed.

\end{document}